\documentclass[12pt]{article}

\usepackage[a4paper,total={6.3in,9.2in}]{geometry}

\usepackage{amsmath,amsthm,amssymb,amsfonts}
\usepackage{mathrsfs}
\usepackage{extarrows}
\usepackage{mathtools} 

\usepackage{tikz}
\usetikzlibrary{positioning}
\usetikzlibrary{arrows.meta,decorations.pathreplacing,calligraphy}

\usepackage{xcolor}
\usepackage{enumerate}
\usepackage{float}

\usepackage[numbers,square,comma,sort&compress]{natbib}
\usepackage[hidelinks]{hyperref}

\newtheorem{thm}{Theorem}[section]
\newtheorem{lem}[thm]{Lemma}
\newtheorem{cor}[thm]{Corollary}
\newtheorem{prop}[thm]{Proposition}

\newtheorem{prob}[thm]{Problem}

\theoremstyle{definition}
\newtheorem{defi}[thm]{Definition}
\newtheorem{exa}[thm]{Example}
\newtheorem{rem}[thm]{Remark}

\newcommand{\defn}[1]{\emph{#1}}

\newcommand{\tre}[1]{\textcolor{red}{#1}}

\newcommand{\ga}{\gamma}
\newcommand{\Ga}{\Gamma}
\newcommand{\lam}{\lambda}
\newcommand{\sig}{\sigma}

\newcommand{\T}{\mathbb{T}}

\newcommand{\cB}{\mathcal{B}}

\newcommand{\cD}{\mathcal{D}}

\newcommand{\cM}{\mathcal{M}}

\newcommand{\cS}{\mathcal{S}}

\newcommand{\cU}{\mathcal{U}}

\newcommand{\sA}{\mathscr{A}}
\newcommand{\sB}{\mathscr{B}}
\newcommand{\sC}{\mathscr{C}}

\newcommand{\UB}{{\rm UB}}

\newcommand{\floor}[1]{\left\lfloor #1 \right\rfloor}

\newcommand{\xto}{\xlongrightarrow}

\newcommand{\cover}{\lessdot}
\newcommand{\join}{\vee}
\newcommand{\meet}{\wedge}

\numberwithin{equation}{section}

\title{Combinatorial aspects of the Delannoy Lattice}
\author{Xi Chen, Yuxian Dong
\thanks{
	Corresponding author.
    \newline\hspace*{3mm}
    {\it Email address:}\quad
    chenxi@dlut.edu.cn (X. Chen),
    yuxiand@hotmail.com (Y. Dong)
    }
}
\date{\footnotesize
School of Mathematical Sciences, Dalian University of Technology, Dalian 116024, PR China}

\begin{document}
\maketitle

\begin{abstract}
The Delannoy numbers $d(n,k)$ count lattice paths from $(0,0)$ to $(n-k,k)$ using steps $(1,0),(0,1)$ and $(1,1)$.
This paper introduces a graded poset $\cD_n$ on the Delannoy paths ending on the line $x+y=n$,
whose rank-generating function is the Delannoy polynomial $d_n(x)=\sum_{k=0}^n d(n,k)x^k$.
We prove that $\cD_n$ is a self-dual lattice,
which is call the Delannoy lattice.
By establishing an explicit symmetric Boolean decomposition,
we show that $\cD_n$ is a symmetric Boolean order,
thereby recovering the $\ga$-positivity of $d_n(x)$.
Such a decomposition is refined to a symmetric chain decomposition with the chain cover property,
and is applied to determine all maximum antichains.
We also investigate other combinatorial aspects of $\cD_n$,
including supersolvability, the M\"obius number, characteristic polynomials, and zeta polynomials.
\\[1pt]
{\sl MSC:}\quad
05A15; 
06A07; 
06E05 
\\
{\sl Keywords:}\quad Delannoy polynomial, gamma-positivity, symmetric Boolean decomposition, supersolvable lattice, M\"obius number
\end{abstract}

\section{Introduction}

A \defn{Delannoy path of size $n$} is a lattice path from $(0,0)$ to some point on the line $x+y=n$ using horizontal step $H=(1,0)$, vertical step $V=(0,1)$, and diagonal step $D=(1,1)$.
We identify a Delannoy path with its word over $\{H,V,D\}$.
Let $\cD_n$ be the set of all Delannoy paths of size $n$.
For $p\in\cD_n$, let $h(p)$, $v(p)$, and $d(p)$ denote the numbers of $H$-, $V$-, and $D$-steps in $p$, respectively.
Then
\[
h(p)+v(p)+2d(p)=n.
\]
Define an \defn{elementary move} in $\cD_n$ as one of the following three cases
\begin{equation}\label{eq-cover}
    aHb \to aVb,\qquad
    aHHb \to aDb,\qquad
    aDb \to aVVb,
\end{equation}
where $a$ and $b$ are arbitrary words over $\{H,V,D\}$.
Each elementary move preserves $h+v+2d=n$,
so it maps $\cD_n$ to itself.
We define a relation $\le$ on $\cD_n$ by $p\le q$ if and only if
there exists a path sequence
\[
p=p_0\to p_1\to\cdots\to p_{\ell}=q
\]
for some $\ell\ge0$.
Reflexivity and transitivity are immediate from definition.
If $p\le q$ and $q\le p$ then we have $v(p)+d(p)=v(q)+d(q)$
since every elementary move increases $v+d$ by exactly one.
Thus $p=q$, which proves antisymmetry.
Hence, $\le$ gives a partial order on $\cD_n$,
and therefore induces a poset.
In the next section, we prove that $\cD_n$ is a self-dual lattice,
which we call the \defn{Delannoy lattice}.

The Delannoy numbers $d(n,k)$ count the Delannoy paths from $(0,0)$ to $(n-k,k)$.
These numbers are named after Henri Delannoy
(see~\cite{BS05} for historical remarks).
The generating functions
\[
d_n(x)=\sum_{k=0}^n d(n,k)x^k, 
\quad n=0,1,2,\dots
\]
are called the \defn{Delannoy polynomials}.
The last step of a Delannoy path gives the recurrence
$$
d(n,k)=d(n-1,k-1)+d(n-1,k)+d(n-2,k-1)
$$
with $d(0,0)=1$ and $d(n,k)=0$ unless $0\le k\le n$.
Consequently,
\[
d_n(x)=(x+1)d_{n-1}(x)+xd_{n-2}(x)
\]
with $d_0(x)=1$ and $d_1(x)=x+1$.
It is known that the Delannoy polynomials are $\ga$-positive
(see~\cite{WZC19} for a combinatorial proof and~\cite{CKS09} for a general result and an algebraic proof).
Here a polynomial $f(x)$ with real coefficients is called \defn{$\ga$-positive} if it can be written as
\[
f(x)=\sum_{i=0}^{\floor{n/2}} \ga_i x^i (1+x)^{n-2i}
\]
with $\ga_i\ge0$ for all $i$.
Gamma positive polynomials arise often in combinatorics and geometry.
We refer the reader to the survey articles \cite{Bra15,Ath18}
and \cite{AS12,LXZ22,JL24,FY25,YYL26} for various examples and generalizations.

One structural source of $\ga$-positive polynomials is the rank-generating functions of graded posets.
If a poset has a symmetric Boolean decomposition,
then its rank-generating function is $\ga$-positive (see \S3 for details).
Classical examples include the decomposition of the noncrossing partition lattice by Simion and Ullman~\cite{SU91}
and that of the shard intersection order by Petersen~\cite{Pet13},
showing the $\ga$-positivity of the Narayana polynomials and the Eulerian polynomials, respectively.
Very recently, Farley and Srinivasan~\cite{FS25} constructed an explicit decomposition of the subspace lattice.
These results motivate an analogous study of the Delannoy polynomials.

The objective of this paper is to show that the rank-generating function of the Delannoy lattice $\cD_n$ is the Delannoy polynomial $d_n(x)$,
and to construct a symmetric Boolean decomposition of $\cD_n$.
We obtain that
\[
d_n(x)=\sum_{i=0}^{\floor{n/2}} \binom{n-i}{i} x^i(1+x)^{n-2i},
\]
which recovers the $\ga$-positivity of the Delannoy polynomials from the viewpoint of poset structure.
The rest of the paper is organized as follows.
In the next section, we prove that $\cD_n$ is a self-dual lattice whose rank-generating function is $d_n(x)$.
An algorithm is provided to find the join and meet of two arbitrary Delannoy paths.
In Section 3, we first construct a symmetric Boolean decomposition of $\cD_n$,
then refine it to a symmetric chain decomposition with the chain cover property and determine all maximum antichains.
In Section 4, we prove that $\cD_n$ is supersolvable.
As a consequence, its M\"obius function equals a signed Fibonacci number.
Section 5 presents explicit formulas for the characteristic polynomials and zeta polynomials of $\cD_n$.
Moreover, we show that the characteristic polynomials have only real zeros and their zeros interlace.
Finally, in the last section, we propose a related problem regarding the normalized matching property.

\section{The Delannoy lattice}
We begin with the poset terminology used throughout the paper and refer the reader to Stanley \cite[Chapter~3]{Sta12} for further background.
Let $P$ be a finite poset.
An element $t$ \defn{covers} $s$, denoted $s\cover t$,
if $s<t$ and no element $u\in P$ satisfies $s<u<t$.
A finite poset is completely determined by its cover relations.
We say that $P$ is \defn{graded of rank} $n$
if every maximal chain of $P$ has the same length $n$.
In this case there is a unique \defn{rank function} $r$: $P\to\{0,1,\dots,n\}$
such that $r(s)=0$ if $s$ is a minimal element of $P$,
and $r(t)=r(s)+1$ if $s\cover t$.
The \defn{rank-generating function} of $P$ is defined as
\[
F(P,x)=\sum_{s\in P}x^{r(s)}
      =\sum_{k=0}^n |P_k|x^k,
\]
where
\[
P_k:=\{s\in P\mid r(s)=k\},
\qquad 0\le k\le n.
\]
A fundamental example of graded posets is the Boolean algebra $\cB_n$ which consists of the subsets of $\{1,2,\dots,n\}$ ordered by inclusion.
Clearly, $F(\cB_n,x)=(1+x)^n$.

\begin{prop}\label{prop-poset}
    The poset $\cD_n$ is graded of rank $n$ with minimum element $H^n$ and maximum element $V^n$,
    and the rank of $p\in\cD_n$ is $v(p)+d(p)$.
    Moreover, the elementary moves in \eqref{eq-cover} are precisely the cover relations of $\cD_n$.
\end{prop}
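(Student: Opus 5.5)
Define $\rho(p)=v(p)+d(p)$. The introduction already notes that each elementary move in \eqref{eq-cover} raises $\rho$ by exactly one. The plan is to show three things: $H^n$ is the unique minimal element, $V^n$ is the unique maximal element, and $p<q$ implies some sequence of elementary moves goes from $p$ to $q$. Gradedness and the identification of covers then follow formally.

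First, the covers. Suppose $p<q$. By definition there is a chain $p=p_0\to p_1\to\cdots\to p_\ell=q$ with $\ell\ge1$, and since $\rho$ strictly increases along it, $\ell=\rho(q)-\rho(p)$. Hence if $p\lessdot q$, then $\ell=1$, because any intermediate $p_1$ would satisfy $p<p_1<q$. Conversely, if $p\to q$ is a single elementary move, then $\rho(q)=\rho(p)+1$. Any $u$ with $p<u<q$ would need $\rho(p)<\rho(u)<\rho(q)$, which is impossible. So the cover relations are exactly the elementary moves.

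Second, the extremal elements. If $p\neq H^n$, then $p$ contains a $V$ or a $D$. The reverse moves $aVb\leftarrow aHb$ and $aDb\leftarrow aHHb$ are defined, so $p$ covers some element and is not minimal. Dually, if $p\ne V^n$, then $p$ contains an $H$ or a $D$, and the move $H\to V$ or $D\to VV$ applies, so $p$ is not maximal. Combined with finiteness, every element lies above $H^n$ and below $V^n$. Thus $H^n$ is the minimum with $\rho(H^n)=0$, and $V^n$ is the maximum with $\rho(V^n)=n$.

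Third, gradedness. Take any maximal chain. Because the poset is finite and has a minimum and a maximum, the chain runs from $H^n$ to $V^n$ through covers. Each cover raises $\rho$ by one, so the chain has length $\rho(V^n)-\rho(H^n)=n$. Consequently the rank function is $r=\rho=v+d$.

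The one point needing care is the claim that elementary moves are covers, since a priori $p<u<q$ might be realized by longer chains. The strict monotonicity of $\rho$ removes this worry, so I expect no real obstacle.
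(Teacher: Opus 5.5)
Your proof is correct and follows essentially the same route as the paper: the strict increase of $v+d$ under each move identifies the cover relations with the elementary moves, and the grading and rank function $r=v+d$ then follow from $v+d$ being $0$ at $H^n$ and $n$ at $V^n$. The only difference is cosmetic. You obtain the minimum and maximum from ``every $p\neq H^n$ covers something (dually for $V^n$), plus finiteness''. The paper instead writes down the explicit move sequence from $H^n$ to $p$, obtained by replacing each $D$ by $HH$ and each $V$ by $H$ and then reversing, which is just your local step iterated.
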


\begin{proof}
Given a path $p\in\cD_n$,
replacing every $D$ in $p$ by $HH$ and every $V$ by $H$ yields $H^n$. Reversing these operations gives a sequence of elementary moves from $H^n$ to $p$. Similarly, replacing every $D$ by $VV$ and every $H$ by $V$ transforms $p$ into $V^n$.
Hence, $H^n\le p\le V^n$.
Therefore, $H^n$ and $V^n$ are the minimum and maximum elements of $\cD_n$, respectively.

Suppose that $p\to q$ is an elementary move.
Then $v(q)+d(q)=v(p)+d(p)+1$.
If $p<u<q$,
then the strict increase of $v+d$ along the partial order would give
\[
v(p)+d(p)<v(u)+d(u)<v(q)+d(q),
\]
which is impossible.
Hence, $p\cover q$.
Conversely, if $p\cover q$, consider a sequence
\[
p=p_0\to p_1\to\cdots\to p_{\ell}=q
\]
with $\ell\ge1$.
If $\ell\ge2$, then $p<p_1<q$, contradicting $p\cover q$.
Thus $\ell=1$, and every cover relation is an elementary move.
It follows that $r(p)=v(p)+d(p)$ is the rank function,
and every maximal chain of $\cD_n$ has the same length $n$,
since $r(V^n)=n$.
Therefore, $\cD_n$ is a graded poset of rank $n$.
\end{proof}

See Figure~\ref{fig-Dn} for the posets $\cD_1$, $\cD_2$, and $\cD_3$.

\begin{figure}[htbp]
\centering
\tikzset{
    cover/.style={
        draw=blue!70!black,
        line width=1pt,
        shorten <=2pt,
        shorten >=2pt
    },
    pathline/.style={
        draw=black,
        line width=1pt,
        line cap=round,
        line join=round
    },
    dot/.style={
        circle,
        fill=black,
        inner sep=1.5pt
    },
    pathbox/.style={
        rectangle,
        draw=none,
        fill=none,
        inner sep=7pt
    }
}

\tikzset{
pics/H/.style={code={
    \draw[pathline] (-.35,0)--(.35,0);
    \node[dot] at (-.35,0) {};
    \node[dot] at (.35,0) {};
}},
pics/V/.style={code={
    \draw[pathline] (0,-.35)--(0,.35);
    \node[dot] at (0,-.35) {};
    \node[dot] at (0,.35) {};
}},
pics/D/.style={code={
    \draw[pathline] (-.35,-.35)--(.35,.35);
    \node[dot] at (-.35,-.35) {};
    \node[dot] at (.35,.35) {};
}},
pics/HH/.style={code={
    \draw[pathline] (-.7,0)--(0,0)--(.7,0);
    \node[dot] at (-.7,0) {};
    \node[dot] at (0,0) {};
    \node[dot] at (.7,0) {};
}},
pics/HV/.style={code={
    \draw[pathline] (-.35,-.35)--(.35,-.35)--(.35,.35);
    \node[dot] at (-.35,-.35) {};
    \node[dot] at (.35,-.35) {};
    \node[dot] at (.35,.35) {};
}},
pics/VH/.style={code={
    \draw[pathline] (-.35,-.35)--(-.35,.35)--(.35,.35);
    \node[dot] at (-.35,-.35) {};
    \node[dot] at (-.35,.35) {};
    \node[dot] at (.35,.35) {};
}},
pics/VV/.style={code={
    \draw[pathline] (0,-.7)--(0,0)--(0,.7);
    \node[dot] at (0,-.7) {};
    \node[dot] at (0,0) {};
    \node[dot] at (0,.7) {};
}},
pics/HHH/.style={code={
    \draw[pathline] (-1.05,0)--(-.35,0)--(.35,0)--(1.05,0);
    \foreach \x in {-1.05,-.35,.35,1.05}
        \node[dot] at (\x,0) {};
}},
pics/HHV/.style={code={
    \draw[pathline] (-.7,-.35)--(0,-.35)--(.7,-.35)--(.7,.35);
    \node[dot] at (-.7,-.35) {};
    \node[dot] at (0,-.35) {};
    \node[dot] at (.7,-.35) {};
    \node[dot] at (.7,.35) {};
}},
pics/HD/.style={code={
    \draw[pathline] (-.7,-.35)--(0,-.35)--(.7,.35);
    \node[dot] at (-.7,-.35) {};
    \node[dot] at (0,-.35) {};
    \node[dot] at (.7,.35) {};
}},
pics/HVH/.style={code={
    \draw[pathline] (-.7,-.35)--(0,-.35)--(0,.35)--(.7,.35);
    \node[dot] at (-.7,-.35) {};
    \node[dot] at (0,-.35) {};
    \node[dot] at (0,.35) {};
    \node[dot] at (.7,.35) {};
}},
pics/DH/.style={code={
    \draw[pathline] (-.7,-.35)--(0,.35)--(.7,.35);
    \node[dot] at (-.7,-.35) {};
    \node[dot] at (0,.35) {};
    \node[dot] at (.7,.35) {};
}},
pics/VHH/.style={code={
    \draw[pathline] (-.7,-.35)--(-.7,.35)--(0,.35)--(.7,.35);
    \node[dot] at (-.7,-.35) {};
    \node[dot] at (-.7,.35) {};
    \node[dot] at (0,.35) {};
    \node[dot] at (.7,.35) {};
}},
pics/HVV/.style={code={
    \draw[pathline] (-.35,-.7)--(.35,-.7)--(.35,0)--(.35,.7);
    \node[dot] at (-.35,-.7) {};
    \node[dot] at (.35,-.7) {};
    \node[dot] at (.35,0) {};
    \node[dot] at (.35,.7) {};
}},
pics/DV/.style={code={
    \draw[pathline] (-.35,-.7)--(.35,0)--(.35,.7);
    \node[dot] at (-.35,-.7) {};
    \node[dot] at (.35,0) {};
    \node[dot] at (.35,.7) {};
}},
pics/VHV/.style={code={
    \draw[pathline] (-.35,-.7)--(-.35,0)--(.35,0)--(.35,.7);
    \node[dot] at (-.35,-.7) {};
    \node[dot] at (-.35,0) {};
    \node[dot] at (.35,0) {};
    \node[dot] at (.35,.7) {};
}},
pics/VD/.style={code={
    \draw[pathline] (-.35,-.7)--(-.35,0)--(.35,.7);
    \node[dot] at (-.35,-.7) {};
    \node[dot] at (-.35,0) {};
    \node[dot] at (.35,.7) {};
}},
pics/VVH/.style={code={
    \draw[pathline] (-.35,-.7)--(-.35,0)--(-.35,.7)--(.35,.7);
    \node[dot] at (-.35,-.7) {};
    \node[dot] at (-.35,0) {};
    \node[dot] at (-.35,.7) {};
    \node[dot] at (.35,.7) {};
}},
pics/VVV/.style={code={
    \draw[pathline] (0,-1.05)--(0,-.35)--(0,.35)--(0,1.05);
    \foreach \y in {-1.05,-.35,.35,1.05}
        \node[dot] at (0,\y) {};
}}
}

\newcommand{\dpathnode}[3]{%
    \node[pathbox] (#1) at #2 {%
        \begin{tikzpicture}[baseline=-0.5ex]
            \pic {#3};
        \end{tikzpicture}%
    };%
}
\scalebox{0.7}{
\begin{tikzpicture}[scale=1]

\begin{scope}[xshift=-1cm]
    \dpathnode{A0}{(0,0)}{H}
    \dpathnode{A1}{(0,2.0)}{V}

    \draw[cover] (A0.north)--(A1.south);

    \node at (0,-0.85) {\Large$\cD_1$};
\end{scope}

\begin{scope}[xshift=3cm]
    \dpathnode{B0}{(0,0)}{HH}

    \dpathnode{B1L}{(-1.7,2.0)}{HV}
    \dpathnode{B1M}{(0,2.0)}{VH}
    \dpathnode{B1R}{(1.7,2.0)}{D}

    \dpathnode{B2}{(0,4.5)}{VV}

    \draw[cover] (B0.north)--(B1L.south);
    \draw[cover] (B0.north)--(B1M.south);
    \draw[cover] (B0.north)--(B1R.south);

    \draw[cover] (B1L.north)--(B2.south);
    \draw[cover] (B1M.north)--(B2.south);
    \draw[cover] (B1R.north)--(B2.south);

    \node at (0,-0.85) {\Large$\cD_2$};
\end{scope}

\begin{scope}[xshift=12cm]
    \dpathnode{C0}{(0,0)}{HHH}

    \dpathnode{C1a}{(-4.8,2.4)}{HHV}
    \dpathnode{C1b}{(-2.4,2.4)}{HVH}
    \dpathnode{C1c}{(0,2.4)}{VHH}
    \dpathnode{C1d}{(2.4,2.4)}{HD}
    \dpathnode{C1e}{(4.8,2.4)}{DH}

    \dpathnode{C2a}{(-4.8,5.2)}{HVV}
    \dpathnode{C2b}{(-2.4,5.2)}{VHV}
    \dpathnode{C2c}{(0,5.2)}{VVH}
    \dpathnode{C2d}{(2.4,5.2)}{DV}
    \dpathnode{C2e}{(4.8,5.2)}{VD}

    \dpathnode{C3}{(0,8.5)}{VVV}

    \draw[cover] (C0.north)--(C1a.south);
    \draw[cover] (C0.north)--(C1b.south);
    \draw[cover] (C0.north)--(C1c.south);
    \draw[cover] (C0.north)--(C1d.south);
    \draw[cover] (C0.north)--(C1e.south);

    \draw[cover] (C1a.north)--(C2a.south); 
    \draw[cover] (C1a.north)--(C2b.south); 
    \draw[cover] (C1a.north)--(C2d.south); 

    \draw[cover] (C1b.north)--(C2a.south); 
    \draw[cover] (C1b.north)--(C2c.south); 

    \draw[cover] (C1c.north)--(C2b.south); 
    \draw[cover] (C1c.north)--(C2c.south); 
    \draw[cover] (C1c.north)--(C2e.south); 

    \draw[cover] (C1d.north)--(C2a.south); 
    \draw[cover] (C1d.north)--(C2e.south); 

    \draw[cover] (C1e.north)--(C2c.south); 
    \draw[cover] (C1e.north)--(C2d.south); 

    \draw[cover] (C2a.north)--(C3.south);
    \draw[cover] (C2b.north)--(C3.south);
    \draw[cover] (C2c.north)--(C3.south);
    \draw[cover] (C2d.north)--(C3.south);
    \draw[cover] (C2e.north)--(C3.south);

    \node at (0,-0.85) {\Large$\cD_3$};
\end{scope}

\end{tikzpicture}
}
\caption{The posets $\cD_1$, $\cD_2$, and $\cD_3$.}
\label{fig-Dn}
\end{figure}

\begin{prop}\label{prop:rank-generating}
    The rank-generating function of $\cD_n$ is the Delannoy polynomial $d_n(x)$.
\end{prop}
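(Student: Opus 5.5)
We will reduce the statement to counting, using Proposition~\ref{prop-poset}. That proposition tells us that $\cD_n$ is graded and that the rank of $p$ is $r(p)=v(p)+d(p)$. The rank-generating function is therefore
\[
F(\cD_n,x)=\sum_{p\in\cD_n}x^{v(p)+d(p)},
\]
so it suffices to show that the number of paths in $\cD_n$ with $v(p)+d(p)=k$ equals $d(n,k)$.

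\textbf{Identifying the endpoint.} The key observation is that $v(p)+d(p)$ is exactly the $y$-coordinate of the endpoint of $p$. The reason is that $V$ and $D$ each raise the height by one, while $H$ does not. Since $p$ ends on the line $x+y=n$, its endpoint is $(n-k,k)$ with $k=v(p)+d(p)$. It follows that the paths of rank $k$ are precisely the Delannoy paths from $(0,0)$ to $(n-k,k)$. By definition there are $d(n,k)$ of these, for $0\le k\le n$. Summing gives
\[
F(\cD_n,x)=\sum_{k=0}^n d(n,k)x^k=d_n(x).
\]

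\textbf{Optional check.} As an alternative, one could verify the recurrence $F(\cD_n,x)=(1+x)F(\cD_{n-1},x)+xF(\cD_{n-2},x)$ directly. To do so, split the paths according to their last step: an $H$ contributes a factor $1$, a $V$ contributes $x$, and a $D$ contributes $x$ and reduces the size by $2$. The initial values are $F(\cD_0,x)=1$ and $F(\cD_1,x)=1+x$. This is not needed, however.

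There is no real obstacle. The only point needing care is the identification of the rank function with the endpoint height, and that is immediate from Proposition~\ref{prop-poset}.
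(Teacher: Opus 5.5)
Your proof is correct and is essentially the paper's argument: you use Proposition~\ref{prop-poset} to identify the rank $v(p)+d(p)$ with the height of the endpoint, so the rank-$k$ elements are exactly the Delannoy paths to $(n-k,k)$, which number $d(n,k)$. Your optional recurrence check is also valid, but as you say it is not needed.
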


\begin{proof}
Note that each $p\in\cD_n$ ends at a point with vertical coordinate $v(p)+d(p)$,
which is precisely the rank of $p$.
Hence, the elements of rank $k$ in $\cD_n$ are exactly the Delannoy paths from $(0,0)$ to $(n-k,k)$.
There are $d(n,k)$ such paths. 
Therefore,
\[
F(\cD_n,x)=\sum_{k=0}^n d(n,k)x^k=d_n(x)
\]
as desired.
\end{proof}



\begin{prop}\label{prop:self-dual}
The poset $\cD_n$ is self-dual.
That is, there exists a bijection $\sig: \cD_n\to\cD_n$ such that $p\le q$ if and only if $\sig(q)\le\sig(p)$.
\end{prop}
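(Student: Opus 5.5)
We construct an explicit order-reversing involution. The natural candidate is the letter swap $\sig$ that exchanges $H$ and $V$ and fixes $D$, applied letter by letter without reversing the word. Since $\sig$ swaps $h$ and $v$ and fixes $d$, it preserves $h+v+2d=n$. Hence $\sig$ maps $\cD_n$ to itself, and $\sig\circ\sig$ is the identity, so $\sig$ is a bijection. On ranks, $r(\sig(p))=h(p)+d(p)=n-r(p)$, which is consistent with order reversal.

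By Proposition~\ref{prop-poset}, the order is generated by the cover relations, which are exactly the elementary moves in \eqref{eq-cover}. So it suffices to show that $\sig$ turns each elementary move $p\to q$ into an elementary move $\sig(q)\to\sig(p)$. We check the three types in turn.
\begin{itemize}
\item[(i)] The move $aHb\to aVb$ becomes $\sig(a)V\sig(b)$ and $\sig(a)H\sig(b)$. The second is obtained from the first by the move $H\to V$ in reverse, that is, $\sig(a)H\sig(b)\to\sig(a)V\sig(b)$.
\item[(ii)] The move $aHHb\to aDb$ becomes $\sig(a)VV\sig(b)$ and $\sig(a)D\sig(b)$. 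Here $\sig(a)D\sig(b)\to\sig(a)VV\sig(b)$ is an elementary move of the third type.
\item[(iii)] The move $aDb\to aVVb$ becomes $\sig(a)D\sig(b)$ and $\sig(a)HH\sig(b)$. Here $\sig(a)HH\sig(b)\to\sig(a)D\sig(b)$ is a move of the second type.
\end{itemize}
Thus $p\to q$ implies $\sig(q)\to\sig(p)$. Chaining moves gives: $p\le q$ implies $\sig(q)\le\sig(p)$. Applying the same implication to $\sig(q),\sig(p)$ and using $\sig^2=\mathrm{id}$ gives the converse.

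There is no real obstacle here. The only point needing care is that the word is not reversed: keeping the prefix $a$ and suffix $b$ in place is what lets the moves match with the same context $\sig(a),\sig(b)$. Exchanging the roles of types (ii) and (iii) is exactly what $\sig$ accomplishes.
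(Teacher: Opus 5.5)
Your proposal is correct and follows the paper's proof: both use the letter swap $\sig$ exchanging $H$ and $V$ while fixing $D$, check that each elementary move $p\to q$ becomes an elementary move $\sig(q)\to\sig(p)$, and then chain moves and use $\sig^2=\mathrm{id}$ for the converse. Your appeal to Proposition~\ref{prop-poset} does no harm but is not needed, since $\le$ is defined directly as the reflexive--transitive closure of the elementary moves.
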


\begin{proof}
Define $\sig: \cD_n\to\cD_n$ by interchanging $H$ and $V$ and fixing $D$.
It is clear that $\sig$ is an involution and it maps the three elementary moves in \eqref{eq-cover} to
\[
aVb \to aHb,\qquad
aDb \to aHHb,\qquad
aVVb \to aDb.
\]
Equivalently, $p\to q$ if and only if $\sig(q)\to\sig(p)$.
Taking transitive closures gives that $p\le q$ if and only if $\sig(q)\le\sig(p)$.
Hence, $\cD_n$ is self-dual.
\end{proof}

Let $p,q\in P$.
The \defn{join} of $p$ and $q$, denoted $p\join q$, is their least upper bound.
Dually, the \defn{meet} of $p$ and $q$, denoted $p\meet q$, is their greatest lower bound.
A poset is called a \defn{lattice} if every pair of elements has a join and a meet.
It is well known that the Boolean algebra $\cB_n$ is a lattice, called the \defn{Boolean lattice}.
In the following we show that $\cD_n$ is also a lattice,
which we call the \defn{Delannoy lattice}.

\begin{thm}\label{thm:Dn-lattice}
The poset $\cD_n$ is a lattice.
\end{thm}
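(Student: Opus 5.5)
Since $\cD_n$ is finite and has a maximum element $V^n$ by Proposition~\ref{prop-poset}, it suffices to show that any two elements have a meet. The set of common lower bounds is nonempty because it contains $H^n$, and the join of $p,q$ is then the meet of their common upper bounds. By Proposition~\ref{prop:self-dual}, existence of meets also yields joins directly via $p\join q=\sig(\sig(p)\meet\sig(q))$. So the whole task reduces to constructing $p\meet q$ and proving it is the greatest common lower bound.

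\textbf{Step 1: an intrinsic description of $\le$.} I would look for a characterization of $p\le q$ that does not refer to move sequences. The natural encoding is to cut $\{1,\dots,n\}$ into cells: an $H$ or $V$ occupies one cell, a $D$ occupies two consecutive cells. Then $p$ is recorded by a lattice-path height function $y_p(i)$ on the positions $i$ it visits on the lines $x+y=i$, together with the set of skipped positions (the midpoints of $D$-steps).

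A plain pointwise comparison of heights is not enough. For example, $HD$ and $DV$ in $\cD_3$ would be comparable in the naive order but are not comparable in $\cD_3$. So the criterion must also constrain where the skipped positions of $p$ and $q$ may sit relative to each other. The conjecture I would test on $\cD_3$ and $\cD_4$ is that $p\le q$ if and only if both of the following hold:
\begin{itemize}
\item[(a)] $y_p(i)\le y_q(i)$ at every position $i$ visited by both paths;
\item[(b)] a position skipped by one path but visited by the other must be compatible, with strict inequality of the neighboring heights in the appropriate direction. In other words, a $D$ of $p$ facing a $VV$ or $HV$ of $q$ is allowed, while a $D$ facing a configuration that would force splitting it in the wrong way is forbidden.
\end{itemize}
Necessity is checked move by move: each elementary move preserves (a) and (b) relative to the original path. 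Sufficiency is proved by induction on $r(q)-r(p)$: whenever $p<q$ in the criterion, exhibit an elementary move $p\to p'$ with $p'$ still satisfying the criterion against $q$. I would choose the move at the first cell where $p$ and $q$ differ.

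\textbf{Step 2: construct the meet greedily, left to right.} Read $p$ and $q$ cell by cell and output the letter that is lowest while staying below both. At a cell where both have $H$ or $V$, take the minimum of the two letters. Where one path starts a $D$, compare it against the two-cell block of the other path. The outcomes are $D\meet D=D$, $D\meet VV=D$, $D\meet HV$ or $D\meet VH$ gives $HH$, and so on. When the other path's $D$ is misaligned, the block is extended by recursion, which produces a finite list of local cases. Then verify two things: the output $m$ satisfies the criterion of Step 1 against both $p$ and $q$, and any common lower bound $u$ satisfies the criterion against $m$. Both checks are done position by position using (a) and (b).

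\textbf{Main obstacle.} The hardest part is Step 1, specifically pinning down condition (b) correctly. It has to account for misaligned diagonal steps, as in the $HD$ versus $DV$ phenomenon. Once the right local criterion is found, the meet algorithm and its verification become a finite case analysis. My first attempt would be to phrase (b) as: for every $i$, if $q$ skips $i$ then $y_p(i-1)<y_q(i+1)$ or $p$ also skips $i$, together with the dual statement for skipped positions of $p$. I would test this against Figure~\ref{fig-Dn} before committing to it.
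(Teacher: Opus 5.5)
Your reduction to meets is fine. The argument, however, stands or falls with Step 1, and the criterion proposed there is not just unproved but false, for a reason more basic than misaligned diagonals.

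The move $aHb\to aVb$ changes a single cell and shifts nothing. So on $\{H,V\}$-words the order of $\cD_n$ is the Boolean order, i.e. containment of the sets of $V$-positions. It is not the dominance order you get by comparing heights. Condition (a) is necessary, since heights only increase along moves, but it is far too weak even when no $D$ is present:
\begin{itemize}
\item In $\cD_2$ we have $y_{HV}=(0,0,1)\le(0,1,1)=y_{VH}$, and (b) is vacuous. Yet $HV$ and $VH$ are distinct elements of rank $1$, hence incomparable.
\item With diagonals the same thing happens. $HV$ and $D$ have equal heights at positions $0,2$, and your test $y_{HV}(0)=0<1=y_D(2)$ passes. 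But $HV\not\le D$.
\item Your informal rule that ``a $D$ of $p$ facing an $HV$ of $q$ is allowed'' is also wrong, since $D$ and $HV$ are incomparable atoms of $\cD_2$.
\end{itemize}
So the sufficiency induction in Step 1 cannot be carried out. Step 2 then has nothing correct to be checked against. Its local rules are in any case only sketched (``and so on''), and the claim that the output is the greatest common lower bound is asserted rather than verified.

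The missing idea is that the order is cell-wise, with rigid alignment of diagonals. Use the decompositions
\[
\cU(Hb)=H\cU(b)\cup V\cU(b)\cup D\cU(c)\ \ (\text{when } b=Hc),\qquad \cU(Vb)=V\cU(b),\qquad \cU(Db)=D\cU(b)\cup VV\cU(b).
\]
From these one gets that $p\le q$ if and only if $\{1,\dots,n\}$ can be cut into consecutive blocks on which $(p,q)$ reads as one of
\[
(H,H),\ (H,V),\ (V,V)\ \text{(one cell)},\qquad (HH,D),\ (D,D),\ (D,VV)\ \text{(two cells)}.
\]
With this criterion, a greedy meet (or join) computation really does reduce to a finite local case analysis, and your route could be completed.

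The paper uses exactly these up-set decompositions but never states a global criterion. It defines the operator $\T$ on pairs at their first point of disagreement. It then checks four identities showing that $\T$ preserves the set of common upper bounds, and iterates until the two paths coincide, which yields the join directly.
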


\begin{proof}
It suffices to prove that every pair of paths in $\cD_n$ has a join.
Then by self-duality, every pair also has a meet.
Let $p_1, p_2\in \cD_n$.
If $p_1=p_2$, then $p_1\join p_2=p_1$.
Suppose that $p_1$ and $p_2$ are distinct.
Let $a$ be their longest common prefix.
We define an operator $\T$ on $\cD_n\times\cD_n$ as one of the following cases
(interchanging $p_1$ and $p_2$ if necessary)
\begin{equation}\label{eq-T}
    \begin{array}{c|c|c}
    \text{Case} & (p_1,p_2) & \T(p_1,p_2) \\
    \hline
    \rm(i) & (aHb_1, aVb_2) & (aVb_1, aVb_2) \\
    \rm(ii) & (aHHb_1, aDb_2) & (aDb_1, aDb_2) \\
    \rm(iii) & (aHb_1, aDb_2) & (aVb_1, aVVb_2) \\
    \rm(iv) & (aVb_1, aDb_2) & (aVb_1, aVVb_2)
    \end{array}
\end{equation}
Here Case (ii) has priority over Case (iii),
that is, in Case (iii) the word $b_1$ does not begin with $H$.
This defines $\T$ symmetrically on ordered pairs by restoring the original order if the two paths were interchanged.
We proceed by iterating $\T$ and denote
\[
(p_1^{(i)},p_2^{(i)}):=\T^i(p_1,p_2).
\]
Observe that the common prefix length of $p_1^{(i)}$ and $p_2^{(i)}$ strictly increases with $i$.
Since a path in $\cD_n$ has at most $n$ steps,
there exists a smallest $k$ such that $p_1^{(k)}=p_2^{(k)}$.

\textbf{Claim}.
Let $u:=p_1^{(k)}=p_2^{(k)}$.
Then $p_1\join p_2=u$.

We need to show that $u$ is the least common upper bound of $p_1$ and $p_2$.

For any $p,q\in\cD_n$, let
\[
\cU(p)=\{q\in\cD_n\mid p\le q\}
\]
denote the set of upper bounds of $p$ and
\[
\UB(p,q):=\cU(p)\cap\cU(q)
\]
be the set of common upper bounds of $p$ and $q$.
If $\cS$ is a set of words and $w$ is a word, write
\[
w\cS=\{ws\mid s\in\cS\}.
\]
The elementary moves give
\begin{align}\label{eq-U}\nonumber
    \cU(Hb)&=H\cU(b)\cup V\cU(b)\cup
    \begin{cases}
        D\cU(c), & \text{if $b=Hc$},\\
        \emptyset, & \text{otherwise},
    \end{cases}\\
    \cU(Vb)&=V\cU(b),\\ \nonumber
    \cU(Db)&=D\cU(b)\cup VV\cU(b).
\end{align}
It is not hard to verify that
\begin{enumerate}[(i)]
    \item $\cU(Hb_1)\cap\cU(Vb_2)=\cU(Vb_1)\cap\cU(Vb_2)$;
    \item $\cU(HHb_1)\cap\cU(Db_2)=\cU(Db_1)\cap\cU(Db_2)$;
    \item $\cU(Hb_1)\cap\cU(Db_2)=\cU(Vb_1)\cap\cU(VVb_2)$, where $b_1$ does not begin with $H$;
    \item $\cU(Vb_1)\cap\cU(Db_2)=\cU(Vb_1)\cap\cU(VVb_2)$.
\end{enumerate}
By applying each case to the corresponding one in \eqref{eq-T}, we obtain
\begin{equation}\label{eq-UB}
    \UB(p_1,p_2)=\UB(p_1^{(1)},p_2^{(1)}).
\end{equation}
Repeated application of \eqref{eq-UB} yields
\[
\UB(p_1,p_2)=\UB(p_1^{(k)},p_2^{(k)})=\UB(u,u)=\cU(u).
\]
This implies that $u$ is a common upper bound of $p_1$ and $p_2$,
and every common upper bound lie above $u$.
Thus, $u$ is the least common upper bound of $p_1$ and $p_2$, as desired.
\end{proof}

\begin{exa}
    Let $p_1=HDHHHV$, $p_2=DVHDH\in\cD_7$.
    The procedure for finding $p_1\join p_2$ is shown in Figure \ref{fig-join},
    where the red steps mark the first distinguishable step(s) of $p_1^{(i)}$ and $p_2^{(i)}$,
    and the roman numbers above each $\mapsto$ indicate which case of \eqref{eq-T} applies when operating $\T$.
    Then we derive $p_1\join p_2=p_1^{(4)}=p_2^{(4)}$.
\end{exa}

\begin{figure}[htbp]
    \centering
    \begin{tabular}{*{4}{cc}l}
        $p_1$ && $p_1^{(1)}$ && $p_1^{(2)}$ && $p_1^{(3)}$ && $p_1^{(4)}$ \\[5pt]
        \begin{tikzpicture}[scale=0.45]
            \draw[thick] (0,0)--(1,0)--(2,1)--(3,1)--(4,1)--(5,1)--(5,2);
            \draw[red,thick] (0,0)--(1,0);
            \foreach \x/\y in {0/0,1/0,2/1,3/1,4/1,5/1,5/2}
                \filldraw (\x,\y) circle(.1);
        \end{tikzpicture}
        &&
        \begin{tikzpicture}[scale=0.45]
            \draw[thick] (0,0)--(0,1)--(1,2)--(2,2)--(3,2)--(4,2)--(4,3);
            \draw[red,thick] (0,1)--(1,2);
            \foreach \x/\y in {0/0,0/1,1/2,2/2,3/2,4/2,4/3}
                \filldraw (\x,\y) circle(.1);
        \end{tikzpicture}
        &&
        \begin{tikzpicture}[scale=0.45]
            \draw[thick] (0,0)--(0,1)--(0,2)--(0,3)--(1,3)--(2,3)--(3,3)--(3,4);
            \draw[red,thick] (1,3)--(3,3);
            \foreach \x/\y in {0/0,0/1,0/2,0/3,1/3,2/3,3/3,3/4}
                \filldraw (\x,\y) circle(.1);
        \end{tikzpicture}
        &&
        \begin{tikzpicture}[scale=0.45]
            \draw[thick] (0,0)--(0,1)--(0,2)--(0,3)--(1,3)--(2,4)--(2,5);
            \draw[red,thick] (2,4)--(2,5);
            \foreach \x/\y in {0/0,0/1,0/2,0/3,1/3,2/4,2/5}
                \filldraw (\x,\y) circle(.1);
        \end{tikzpicture}
        &&
        \begin{tikzpicture}[scale=0.45]
            \draw[thick] (0,0)--(0,1)--(0,2)--(0,3)--(1,3)--(2,4)--(2,5);
            \foreach \x/\y in {0/0,0/1,0/2,0/3,1/3,2/4,2/5}
                \filldraw (\x,\y) circle(.1);
        \end{tikzpicture}\\
        & $\xmapsto{\rm(iii)}$ && $\xmapsto{\rm(iv)}$ && $\xmapsto{\rm(ii)}$ && $\xmapsto{\rm(i)}$ \\
        \begin{tikzpicture}[scale=0.45]
            \draw[thick] (0,0)--(1,1)--(1,2)--(2,2)--(3,3)--(4,3);
            \draw[red,thick] (0,0)--(1,1);
            \foreach \x/\y in {0/0,1/1,1/2,2/2,3/3,4/3}
                \filldraw (\x,\y) circle(.1);
        \end{tikzpicture}
        &&
        \begin{tikzpicture}[scale=0.45]
            \draw[thick] (0,0)--(0,1)--(0,2)--(0,3)--(1,3)--(2,4)--(3,4);
            \draw[red,thick] (0,1)--(0,2);
            \foreach \x/\y in {0/0,0/1,0/2,0/3,1/3,2/4,3/4}
                \filldraw (\x,\y) circle(.1);
        \end{tikzpicture}
        &&
        \begin{tikzpicture}[scale=0.45]
            \draw[thick] (0,0)--(0,1)--(0,2)--(0,3)--(1,3)--(2,4)--(3,4);
            \draw[red,thick] (1,3)--(2,4);
            \foreach \x/\y in {0/0,0/1,0/2,0/3,1/3,2/4,3/4}
                \filldraw (\x,\y) circle(.1);
        \end{tikzpicture}
        &&
        \begin{tikzpicture}[scale=0.45]
            \draw[thick] (0,0)--(0,1)--(0,2)--(0,3)--(1,3)--(2,4)--(3,4);
            \draw[red,thick] (2,4)--(3,4);
            \foreach \x/\y in {0/0,0/1,0/2,0/3,1/3,2/4,3/4}
                \filldraw (\x,\y) circle(.1);
        \end{tikzpicture}
        &&
        \begin{tikzpicture}[scale=0.45]
            \draw[thick] (0,0)--(0,1)--(0,2)--(0,3)--(1,3)--(2,4)--(2,5);
            \foreach \x/\y in {0/0,0/1,0/2,0/3,1/3,2/4,2/5}
                \filldraw (\x,\y) circle(.1);
        \end{tikzpicture}\\
        $p_2$ && $p_2^{(1)}$ && $p_2^{(2)}$ && $p_2^{(3)}$ && $p_2^{(4)}$
    \end{tabular}
    \caption{Procedure for finding $p_1\join p_2$ with $p_1=HDHHHV$, $p_2=DVHDH\in\cD_7$.}
    \label{fig-join}
\end{figure}

\begin{rem}
    Orders on Delannoy paths have previously been studied from a different viewpoint.
    Autebert et al. \cite{ALS02} considered paths between two fixed endpoints to obtain a distributive lattice.
    Autebert and Schwer \cite{AS03} extended this approach to generalized Delannoy path.
    These constructions are related to Greene's posets of shuffles \cite{Gre88} and their generalizations \cite{Her02}.
    The Delannoy lattice $\cD_n$ studied in this paper is different;
    its paths have varying endpoints on the line $x+y=n$.
    Most importantly, the rank-generating function of our Delannoy lattice is the Delannoy polynomial, which is not the case for the lattice of Autebert et al.
    In particular, $\cD_n$ is not distributive for $n\ge2$
    (see Remark \ref{rem-notdis}).
\end{rem}

\section{Symmetric decompositions}

We first establish a symmetric Boolean decomposition of $\cD_n$,
then present an algorithm for constructing a symmetric chain decomposition and for determining the maximum antichains of $\cD_n$.

\subsection{A symmetric Boolean decomposition}

\begin{defi}
Let $P$ be a finite graded poset of rank $n$ with rank function $r$.
A subset $B\subseteq P$ is called a \defn{symmetric Boolean subset} if under the induced order,
\begin{enumerate}[(1)]
    \item $B$ has a bottom element $b$ and a top element $t$;
    \item $r(b)+r(t)=n$;
    \item $B$ is isomorphic to the Boolean lattice $\cB_{r(t)-r(b)}$.
\end{enumerate}
A \defn{symmetric Boolean decomposition} of $P$ is a partition of $P$ into pairwise disjoint symmetric Boolean subsets. 
In this case, $P$ is called a \defn{symmetric Boolean order}.
\end{defi}

If a symmetric Boolean subset has bottom rank $i$,
then its top rank is $n-i$,
so it is isomorphic to $\cB_{n-2i}$ and contributes $x^i(1+x)^{n-2i}$ to the rank-generating function.
Hence, if $P$ is a symmetric Boolean order, then $F(P,x)$ is $\gamma$-positive.
More precisely,
\[
F(P,x)=\sum_{i=0}^{\floor{n/2}} \ga_i x^i(1+x)^{n-2i},
\]
where $\gamma_i$ counts symmetric Boolean subsets with bottom rank $i$.

In order to construct a symmetric Boolean decomposition of $\cD_n$,
we introduce the following encoding by matchings of a path graph.
Given $p\in\cD_n$,
consider a path graph $G(p)$ with vertices $1,2,\dots,n$ in order from left to right,
and place $H$, $V$, and $D$ on the vertices such that
$H$ or $V$ occupies one vertex, whereas $D$ occupies two consecutive vertices.
Let $M(p)$ be the set of edges in $G(p)$ whose endpoints are both occupied by the same $D$-step of $p$.
Then $M(p)$ forms a matching of $G(p)$.
A vertex is called a \defn{free position} of $p$ if it is not covered by $M(p)$.
So the number of free positions of $p$ is $h(p)+v(p)$.
Let
\(
G_n=\{G(p)\mid p\in\cD_n\}
\)
and let $\cM_n$ be the set of all matchings of $G_n$.

\begin{exa}
    Let $p=HDDVDH\in\cD_9$.
    Then $G(p)$ is as follows and $M(p)=\{23,45,78\}$.
    \begin{center}
        \begin{tikzpicture}
            \draw (0,0)--(8,0);
            \foreach \x in {1,...,9}
                {
                    \filldraw (\x-1,0) circle(.05);
                    \node[below] at (\x-1,0) {$\x$};
                }
            \foreach \x in {1,9}
                \node[above] at (\x-1,0) {$H$};
            \node[above] at (5,0) {$V$};
            \foreach \x in {2,3,4,5,7,8}
                \node[above] at (\x-1,0) {$D$};
            \draw[ultra thick] (1,0)--(2,0) (3,0)--(4,0) (6,0)--(7,0);
        \end{tikzpicture}
    \end{center}
\end{exa}

\begin{thm}\label{sbo}
    The Delannoy lattice $\cD_n$ is a symmetric Boolean order.
    More precisely,
    for each $M\in\cM_n$, let
    \[
    B_M=\{p\in\cD_n\mid M(p)=M\}.
    \]
    Then the sets $B_M$, where $M$ ranges over $\cM_n$, form a symmetric Boolean decompostion of $\cD_n$.
    Moreover,
    \[
    \cD_n=\biguplus_{M\in\cM_n} B_M=\biguplus_{i=0}^{\floor{n/2}} \binom{n-i}{i} \cB_{i,n-i},
    \]
    where $\cB_{i,n-i}$ denotes the Boolean lattice,
    formed by $p\in B_M$ with $|M|=i$,
    whose bottom and top elements have ranks $i$ and $n-i$, respectively.
\end{thm}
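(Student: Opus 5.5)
The approach is to show that each $B_M$ is an interval of $\cD_n$ isomorphic to a Boolean lattice, with ranks placed symmetrically, and then to count the matchings.

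\textbf{Partition.} The sets $B_M$ partition $\cD_n$, since every $p$ determines a unique matching $M(p)\in\cM_n$. Conversely, each matching $M$ of the path graph on $\{1,\dots,n\}$ arises from some $p$: put $D$ on each edge of $M$ and $H$ on each free vertex. So each $B_M$ is nonempty.

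\textbf{Description of $B_M$ and its bottom and top.} Fix $M$ with $|M|=i$. Then $G$ has $n-2i$ free positions. An element $p\in B_M$ is determined by assigning $H$ or $V$to each free position, with $D$ on every edge of $M$. Define
\[
\phi\colon B_M\to\cB_{n-2i}, \qquad \phi(p)=\{\text{free positions carrying } V\}.
\]
This is a bijection. The rank is $r(p)=v(p)+d(p)=|\phi(p)|+i$. Hence the bottom element (all free positions $H$) has rank $i$, the top element (all free positions $V$) has rank $n-i$, and $i+(n-i)=n$.

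\textbf{Order isomorphism.} This is the main step. We must show that for $p,q\in B_M$, $p\le q$ in $\cD_n$ if and only if $\phi(p)\subseteq\phi(q)$.

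The direction ($\Leftarrow$) is immediate. If $\phi(p)\subseteq\phi(q)$, a sequence of moves $aHb\to aVb$ at the positions of $\phi(q)\setminus\phi(p)$ leads from $p$ to $q$.

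For ($\Rightarrow$), I would introduce a monotone invariant. To each path $p$ and each position $j\in\{1,\dots,n\}$, assign a value $f_j(p)\in\{0,\tfrac12,1\}$: take $0$ if $j$ carries $H$, $\tfrac12$ if $j$ is covered by a $D$, and $1$ if $j$ carries $V$. A cleaner alternative is the prefix statistic $s_j(p)$, the $y$-coordinate after processing the first $j$ positions, where a $D$ occupying positions $j,j+1$ contributes $\tfrac12$ to each.

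Check that every elementary move weakly increases every $s_j$:
\begin{itemize}
\item $H\to V$ raises the values from position $j$ onward by $1$.
\item $HH\to D$ turns $(0,0)$ into $(\tfrac12,\tfrac12)$.
\item $D\to VV$ turns $(\tfrac12,\tfrac12)$ into $(1,1)$.
\end{itemize}
Since all positional values increase weakly, the prefix sums increase as well. Therefore $p\le q$ implies $f_j(p)\le f_j(q)$ for all $j$, using the positional values directly.

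For $p,q\in B_M$, the free positions carry values in $\{0,1\}$, so $f_j(p)\le f_j(q)$ at every free position gives $\phi(p)\subseteq\phi(q)$. Consequently $B_M$ with the induced order is isomorphic to $\cB_{n-2i}$, and it is a symmetric Boolean subset.

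\textbf{Counting.} The number of matchings of size $i$ in a path on $n$ vertices is $\binom{n-i}{i}$. To see this, contract each matched edge to a single object; this leaves $n-i$ objects, of which $i$ are chosen to be edges. This yields the displayed decomposition $\biguplus_i\binom{n-i}{i}\cB_{i,n-i}$.

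The only real subtlety is the ($\Rightarrow$) direction of the order isomorphism. A chain from $p$ to $q$ in $\cD_n$ may leave $B_M$, for instance by merging $HH\to D$ and later splitting $D\to VV$. The positional-value invariant handles this, because it is monotone under all three move types regardless of whether the intermediate paths stay in $B_M$.
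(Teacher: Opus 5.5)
Your proposal is correct and follows the paper's proof: partition $\cD_n$ by the matching $M(p)$, fill the $n-2i$ free positions of $B_M$ independently with $H$ or $V$ to get ranks from $i$ to $n-i$, and count matchings of size $i$ as $\binom{n-i}{i}$ (you by contracting matched edges, the paper via non-consecutive left endpoints). Your positional invariant $f_j$, which all three elementary moves weakly increase, adds something the paper leaves out: it shows that $p\le q$ in $\cD_n$ with $p,q\in B_M$ forces $\phi(p)\subseteq\phi(q)$ even when a chain leaves $B_M$ (e.g.\ via $HH\to D\to VV$), whereas the paper simply asserts that $B_M$ with its induced order is Boolean.
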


\begin{proof}
Every path $p\in\cD_n$ has a unique matching $M(p)$,
so the sets $B_M$ form a partition of $\cD_n$.
Each matching $M\in\cM_n$ of size $i$ contributes $2i$ copies of $D$,
leaving $n-2i$ free positions.
Then a path in $B_M$ is obtained by filling each free position independently with $H$ or $V$.
Note that the Boolean lattice $\cB_n$ can alternatively be regarded as the set of lattice paths from $(0,0)$ to the line $x+y=n$ using steps $H$ and $V$,
ordered by the cover relation $H\cover V$.
Thus, $B_M$, with its induced order, is isomorphic to $\cB_{n-2i}$.

The minimum element of $B_M$ has all free positions filled with $H$,
and hence has rank $i$.
The maximum element has all free positions filled with $V$,
and hence has rank $i+(n-2i)=n-i$.
Therefore, $B_M$ is symmetric.

It remains to count the matchings.
A matching of size $i$ in $G_n$ is determined by left endpoints choosing from $1,2,\dots,n-1$ that are not consecutive.
Thus there are $\binom{n-i}{i}$ matchings of size $i$,
which proves the desired decomposition.
See Figure \ref{fig:D3-SBO} for instance.
\end{proof}

\begin{figure}[htbp]
\centering
\scalebox{0.7}{
\begin{tikzpicture}[
    scale=0.95,
    line cap=round,
    line join=round,
    cover/.style={
        draw=blue!70!black,
        line width=0.8pt,
        shorten <=2pt,
        shorten >=2pt
    },
    redcover/.style={
        draw=red!85!black,
        line width=1.8pt,
        shorten <=2pt,
        shorten >=2pt
    },
    pathline/.style={
        draw=black,
        line width=1pt,
        line cap=round,
        line join=round
    },
    dot/.style={
        circle,
        fill=black,
        inner sep=1.5pt
    },
    pathbox/.style={
        rectangle,
        draw=none,
        fill=none,
        inner sep=7pt
    }
]

\tikzset{
pics/HHH/.style={code={
    \draw[pathline] (-1.05,0)--(-.35,0)--(.35,0)--(1.05,0);
    \foreach \x in {-1.05,-.35,.35,1.05}
        \node[dot] at (\x,0) {};
}},
pics/VVV/.style={code={
    \draw[pathline] (0,-1.05)--(0,-.35)--(0,.35)--(0,1.05);
    \foreach \y in {-1.05,-.35,.35,1.05}
        \node[dot] at (0,\y) {};
}},
pics/HHV/.style={code={
    \draw[pathline] (-.7,-.35)--(0,-.35)--(.7,-.35)--(.7,.35);
    \foreach \x/\y in {-.7/-.35,0/-.35,.7/-.35,.7/.35}
        \node[dot] at (\x,\y) {};
}},
pics/HD/.style={code={
    \draw[pathline] (-.7,-.35)--(0,-.35)--(.7,.35);
    \foreach \x/\y in {-.7/-.35,0/-.35,.7/.35}
        \node[dot] at (\x,\y) {};
}},
pics/HVH/.style={code={
    \draw[pathline] (-.7,-.35)--(0,-.35)--(0,.35)--(.7,.35);
    \foreach \x/\y in {-.7/-.35,0/-.35,0/.35,.7/.35}
        \node[dot] at (\x,\y) {};
}},
pics/DH/.style={code={
    \draw[pathline] (-.7,-.35)--(0,.35)--(.7,.35);
    \foreach \x/\y in {-.7/-.35,0/.35,.7/.35}
        \node[dot] at (\x,\y) {};
}},
pics/VHH/.style={code={
    \draw[pathline] (-.7,-.35)--(-.7,.35)--(0,.35)--(.7,.35);
    \foreach \x/\y in {-.7/-.35,-.7/.35,0/.35,.7/.35}
        \node[dot] at (\x,\y) {};
}},
pics/HVV/.style={code={
    \draw[pathline] (-.35,-.7)--(.35,-.7)--(.35,0)--(.35,.7);
    \foreach \x/\y in {-.35/-.7,.35/-.7,.35/0,.35/.7}
        \node[dot] at (\x,\y) {};
}},
pics/DV/.style={code={
    \draw[pathline] (-.35,-.7)--(.35,0)--(.35,.7);
    \foreach \x/\y in {-.35/-.7,.35/0,.35/.7}
        \node[dot] at (\x,\y) {};
}},
pics/VHV/.style={code={
    \draw[pathline] (-.35,-.7)--(-.35,0)--(.35,0)--(.35,.7);
    \foreach \x/\y in {-.35/-.7,-.35/0,.35/0,.35/.7}
        \node[dot] at (\x,\y) {};
}},
pics/VD/.style={code={
    \draw[pathline] (-.35,-.7)--(-.35,0)--(.35,.7);
    \foreach \x/\y in {-.35/-.7,-.35/0,.35/.7}
        \node[dot] at (\x,\y) {};
}},
pics/VVH/.style={code={
    \draw[pathline] (-.35,-.7)--(-.35,0)--(-.35,.7)--(.35,.7);
    \foreach \x/\y in {-.35/-.7,-.35/0,-.35/.7,.35/.7}
        \node[dot] at (\x,\y) {};
}}
}

\def\dpathnode#1#2#3{%
    \node[pathbox] (#1) at #2 {%
        \begin{tikzpicture}[baseline=-0.5ex]
            \pic {#3};
        \end{tikzpicture}%

    };%
}

\dpathnode{C0}{(0,0)}{HHH}

\dpathnode{C1a}{(-5.2,2.7)}{HHV}
\dpathnode{C1b}{(-2.6,2.7)}{HVH}
\dpathnode{C1c}{(0,2.7)}{VHH}
\dpathnode{C1d}{(2.6,2.7)}{HD}
\dpathnode{C1e}{(5.2,2.7)}{DH}

\dpathnode{C2a}{(-5.2,5.6)}{HVV}
\dpathnode{C2b}{(-2.6,5.6)}{VHV}
\dpathnode{C2c}{(0,5.6)}{VVH}
\dpathnode{C2d}{(2.6,5.6)}{DV}
\dpathnode{C2e}{(5.2,5.6)}{VD}

\dpathnode{C3}{(0,8.8)}{VVV}


\draw[cover] (C0.north)--(C1a.south); 
\draw[cover] (C0.north)--(C1b.south); 
\draw[cover] (C0.north)--(C1c.south); 
\draw[cover] (C0.north)--(C1d.south); 
\draw[cover] (C0.north)--(C1e.south); 

\draw[cover] (C1a.north)--(C2a.south); 
\draw[cover] (C1a.north)--(C2b.south); 
\draw[cover] (C1a.north)--(C2d.south); 

\draw[cover] (C1b.north)--(C2a.south); 
\draw[cover] (C1b.north)--(C2c.south); 

\draw[cover] (C1c.north)--(C2b.south); 
\draw[cover] (C1c.north)--(C2c.south); 
\draw[cover] (C1c.north)--(C2e.south); 

\draw[cover] (C1d.north)--(C2a.south); 
\draw[cover] (C1d.north)--(C2e.south); 

\draw[cover] (C1e.north)--(C2c.south); 
\draw[cover] (C1e.north)--(C2d.south); 

\draw[cover] (C2a.north)--(C3.south); 
\draw[cover] (C2b.north)--(C3.south); 
\draw[cover] (C2c.north)--(C3.south); 
\draw[cover] (C2d.north)--(C3.south); 
\draw[cover] (C2e.north)--(C3.south); 



\draw[redcover] (C0.north)--(C1a.south); 
\draw[redcover] (C0.north)--(C1b.south); 
\draw[redcover] (C0.north)--(C1c.south); 

\draw[redcover] (C1a.north)--(C2a.south); 
\draw[redcover] (C1a.north)--(C2b.south); 

\draw[redcover] (C1b.north)--(C2a.south); 
\draw[redcover] (C1b.north)--(C2c.south); 

\draw[redcover] (C1c.north)--(C2b.south); 
\draw[redcover] (C1c.north)--(C2c.south); 

\draw[redcover] (C2a.north)--(C3.south); 
\draw[redcover] (C2b.north)--(C3.south); 
\draw[redcover] (C2c.north)--(C3.south); 

\draw[redcover] (C1d.north)--(C2e.south); 
\draw[redcover] (C1e.north)--(C2d.south); 

\end{tikzpicture}
}
\caption{A symmetric Boolean decomposition of $\cD_3$, with the symmetric Boolean subsets highlighted in red. Precisely, $\cD_3=\cB_{0,3}\cup2\cB_{1,2}$.}
\label{fig:D3-SBO}
\end{figure}

Combining Proposition~\ref{prop:rank-generating} and Theorem~\ref{sbo} gives the following.

\begin{cor}
The Delannoy polynomials are $\gamma$-positive with
\[
d_n(x)=\sum_{i=0}^{\floor{n/2}} \binom{n-i}{i}x^i(1+x)^{n-2i}.
\]
\end{cor}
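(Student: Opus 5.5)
The corollary follows by comparing two expressions for the rank-generating function $F(\cD_n,x)$. By Proposition~\ref{prop:rank-generating}, $F(\cD_n,x)=d_n(x)$. It remains to compute $F(\cD_n,x)$ a second time from the decomposition in Theorem~\ref{sbo}.

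The rank-generating function is additive over a partition into disjoint subsets, provided each element keeps its rank in $\cD_n$. So $F(\cD_n,x)=\sum_{M\in\cM_n}\sum_{p\in B_M}x^{r(p)}$.

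Fix a matching $M$ with $|M|=i$. By Theorem~\ref{sbo}, each $p\in B_M$ has $r(p)=v(p)+d(p)=i+v(p)$, since $d(p)=i$. Here $v(p)$ is the number of free positions filled with $V$. There are $n-2i$ free positions, each filled independently with $H$ or $V$. Hence
\[
\sum_{p\in B_M}x^{r(p)}=x^i(1+x)^{n-2i}.
\]
Equivalently, this is the general observation preceding the definition of symmetric Boolean subsets: a subset with bottom rank $i$ contributes $x^i(1+x)^{n-2i}$.

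Summing over matchings, Theorem~\ref{sbo} gives exactly $\binom{n-i}{i}$ matchings of size $i$. Therefore
\[
d_n(x)=F(\cD_n,x)=\sum_{i=0}^{\floor{n/2}}\binom{n-i}{i}x^i(1+x)^{n-2i}.
\]
The coefficients $\ga_i=\binom{n-i}{i}$ are nonnegative integers, so $d_n(x)$ is $\ga$-positive.

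No step is a genuine obstacle. The only point to state carefully is that ranks within $B_M$ are the ranks in $\cD_n$, not the ranks in the Boolean lattice $B_M$ itself. This is what produces the shift by $x^i$.
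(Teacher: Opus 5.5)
Your proposal is correct and is essentially the paper's argument. The paper derives the corollary by combining Proposition~\ref{prop:rank-generating} ($F(\cD_n,x)=d_n(x)$) with the symmetric Boolean decomposition of Theorem~\ref{sbo}, in which each of the $\binom{n-i}{i}$ components $B_M$ with $|M|=i$ contributes $x^i(1+x)^{n-2i}$.
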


\subsection{A symmetric chain decomposition}

\begin{defi}
Let $P$ be a finite graded poset of rank $n$.
A saturated chain
\[ 
x_0\cover x_1\cover \cdots \cover x_{\ell} 
\] 
is \defn{symmetric} if $r(x_0)+r(x_{\ell})=n$.
A \defn{symmetric chain decomposition} of $P$ is a partition of $P$ into pairwise disjoint symmetric chains.
\end{defi}

It is known that the Boolean lattice admits a symmetric chain decomposition.
An explicit description of the symmetric chains was given by Greene and Kleitman~\cite{GK76}
using bracketing construction.
We apply the Greene-Kleitman decomposition separately to each Boolean component $B_M$ and give an explicit symmetric chain decomposition of $\cD_n$.

For $p=s_1s_2\cdots s_m\in\cD_n$,
define a sequence $x_1x_2\cdots x_m$ by
\[
x_i=
\begin{cases}
(, & \text{if } s_i=H,\\
), & \text{if } s_i=V,\\
*, & \text{if } s_i=D.
\end{cases}
\]
Fix each ``$*$'' in the sequence and scan the remaining letters from left to right,
matching each ``$)$'' with the nearest unmatched ``$($'' to its left.
After the matching is completed, the unmatched parentheses other than $*$ have the form
\begin{equation}\label{eq-matching}
    \underbrace{))\cdots))}_u \underbrace{((\cdots((}_{\ell-u}
\end{equation}
for unique integers $u$ and $\ell$ with $0\le u\le\ell$.
Keep all matched parentheses and all ``$*$'' fixed.
For $0\le k\le\ell$, let $p_k$ be the Delannoy path obtained by placing ``$)$'' in the first $k$ unmatched positions and ``$($'' in the remaining $\ell-k$ unmatched positions.
Then
\[
p_0\cover p_1\cover\cdots\cover p_{\ell}
\]
is the chain containg the original path $p$ (which is $p_u$).

\begin{exa}
    Let $p=HDVVDDVHHVDVHD\in\cD_{19}$.
    Then after the matching,
    \[
    p\mapsto \tre{(}*\tre{)})**)\tre{(()}*\tre{)}(*
    \]
    with $u=2$ and $\ell=3$,
    where matched parentheses are shown in red.
    So the following chian $p_0\cover p_1\cover p=p_2\cover p_3$ contains $p$.
    \[
    \begin{array}{rcc}
        p_0=HDVHDDHHHVDVHD & \mapsto & \tre{(}*\tre{)}(**(\tre{(()}*\tre{)}(* \\
        p_1=HDVVDDHHHVDVHD & \mapsto & \tre{(}*\tre{)})**(\tre{(()}*\tre{)}(* \\
        p=p_2=HDVVDDVHHVDVHD & \mapsto & \tre{(}*\tre{)})**)\tre{(()}*\tre{)}(* \\
        p_3=HDVVDDVHHVDVVD & \mapsto & \tre{(}*\tre{)})**)\tre{(()}*\tre{)})*
    \end{array}
    \]
\end{exa}

\begin{prop}\label{prop-SCD}
    The chains constructed above form a symmetric chain decomposition of $\cD_n$.
\end{prop}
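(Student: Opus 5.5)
The plan is to reduce everything to the Boolean components $B_M$ of Theorem~\ref{sbo} and then invoke the Greene--Kleitman bracketing decomposition inside each one. Fix $M\in\cM_n$ with $|M|=i$. The $D$-steps of any $p\in B_M$ occupy exactly the positions of $M$, so they are the fixed symbols ``$*$''. The free positions carry $H$ or $V$, i.e.\ ``$($'' or ``$)$''. Deleting the $*$'s, a path in $B_M$ becomes a word of length $N=n-2i$ over $\{(,)\}$. By the proof of Theorem~\ref{sbo}, this word is the image of $p$ under an isomorphism $B_M\cong\cB_N$ in which $H\cover V$ at a single free position corresponds to adding one element. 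The matching procedure ignores the $*$'s, so it is exactly the Greene--Kleitman bracketing applied to this word. Hence the chain produced from $p$ is precisely the image of the Greene--Kleitman chain through the corresponding subset.

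The key steps, in order, are as follows. First, the construction is well defined and the chains partition $\cD_n$. The unmatched symbols have the form \eqref{eq-matching}. Changing unmatched positions so that the pattern remains of the form $)^k(^{\ell-k}$ does not alter which pairs are matched: a new ``$)$'' placed before all unmatched ``$($'' still has no unmatched ``$($'' to its left, and the matched pairs are nested blocks unaffected by the rest. Therefore every $p_k$ produces the same chain, and two paths lie in the same chain if and only if they agree on matched positions and $*$'s. This gives the partition. Second, each step $p_{k-1}\to p_k$ replaces a single $H$ by $V$ at one position, so it is an elementary move $aHb\to aVb$. By Proposition~\ref{prop-poset} it is therefore a cover relation of $\cD_n$, and the chain is saturated.

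The third step is symmetry, which needs care because it concerns ranks in $\cD_n$ rather than in $B_M$. Suppose there are $m$ matched pairs. Since $r=v+d$ by Proposition~\ref{prop-poset},
\[
r(p_0)=i+m,\qquad r(p_\ell)=i+m+\ell,
\]
where $N=2m+\ell$. Hence
\[
r(p_0)+r(p_\ell)=2i+2m+\ell=2i+N=n,
\]
so each chain is symmetric in $\cD_n$.

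I expect the only real obstacle to be the first step, namely checking that the bracketing is invariant along the chain. This is the standard Greene--Kleitman lemma; I would either cite \cite{GK76} or verify it directly by the invariance argument above. The rest is bookkeeping with the rank function $r=v+d$ and the fact that $B_M$ sits inside $\cD_n$ with ranks shifted by $i$.
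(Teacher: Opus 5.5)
Your proposal is correct and follows essentially the same route as the paper: you restrict to each Boolean component $B_M$ and identify the construction with the Greene--Kleitman bracketing, use invariance of the matched pairs along a chain to get the partition, and compute $r(p_0)+r(p_\ell)=2i+2m+\ell=n$ from $r=v+d$. Your explicit check that each step $p_{k-1}\to p_k$ is an elementary move $aHb\to aVb$, so that the chains are saturated, is a detail the paper leaves implicit.
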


\begin{proof}
Within a fiexed component $B_M$, the above construction is the Green-Kleitman decomposition of a Boolean lattice.
Note that changing the unmatched parentheses as above does not alter any of the matched pairs.
Thus every $p_k$ has the same matched pairs and the same unmatched positions,
and therefore,
applying the construction to any $p_k$ recovers the same chain.
So all these chains partition $B_M$, and hence, partition $\cD_n$.

It remains to check the symmetry of each chain with respect to the rank
(total number of $V$'s and $D$'s)
in $\cD_n$.
Suppose that $|M|=i$ and the matching procedure creates $t$ matched pairs.
Then \eqref{eq-matching} implies $2t+\ell+2i=n$.
Since each matched pairs contains one $V$,
and all unmatched positions in $p_0$ (resp. $p_{\ell}$) are $H$ (resp. $V$),
we have $r(p_0)=t+i$ and $r(p_k)=t+\ell+i$.
Hence,
\(
r(p_0)+r(p_k)=2t+\ell+2i=n.
\)
Therefore, every chain is symmetric.
\end{proof}

We next verify a compatibility property of this symmetric chain decomposition.
Following Griggs, Killian and Savage~\cite{GKS04}, we define the chain cover property.
Let $\sC$ be a symmetric chain decomposition of a finite graded poset.
A chain of maximum length in $\sC$ is called a \defn{root chain}.
We say that $\mathscr C$ has the \defn{chain cover property} if, for every non-root chain $C\in\mathscr C$, there exists a chain $C^+\in\mathscr C$ such that
\[
\min C^+\cover\min C
\qquad\text{and}\qquad
\max C\cover\max C^+.
\]

\begin{lem}[\cite{GK76,GKS04}]
\label{lem:GK-chain-cover}
The Greene-Kleitman symmetric chain decomposition of the Boolean lattice $\cB_n$ has the chain cover property.
Its unique root chain begins at $\emptyset$ and ends at $[n]$.
\end{lem}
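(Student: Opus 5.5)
We need to prove that the Greene–Kleitman symmetric chain decomposition of $\cB_n$ has the chain cover property, and that its unique root chain runs from $\emptyset$ to $[n]$. We identify subsets of $[n]$ with words of length $n$ in ``$($'' and ``$)$'', writing ``$)$'' at position $j$ if $j$ lies in the subset. A GK chain is determined by its fixed set of matched pairs together with the unmatched positions $j_1<\cdots<j_\ell$. Its minimum has all unmatched positions equal to ``$($'', and its maximum has all of them equal to ``$)$''.

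\textbf{Root chain.} If $t$ matched pairs exist, then $\ell=n-2t$. Hence the chain length $\ell$ is maximal exactly when $t=0$, i.e.\ when there are no matched pairs. The only chain with no matched pairs is the one whose minimum is $(((\cdots($, namely $\emptyset$. Its maximum is $[n]$. This shows the root chain is unique and has the stated endpoints.

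\textbf{Chain cover property.} Let $C$ be a non-root chain, so $t\ge1$. I construct $C^+$ by dissolving one matched pair: the \emph{outermost, rightmost}, or more simply a pair $(a,b)$, $a<b$, that is minimal among pairs not enclosed by any other pair and that is adjacent to the unmatched region. The cleanest choice is as follows. Since the unmatched letters form $)^u(^{\ell-u}$ and each matched pair is nested inside a ``gap'' between consecutive unmatched positions, pick any top-level matched pair $(a,b)$, meaning one not nested inside another matched pair.

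Define $x=\min C$ with position $b$ changed from ``$)$'' to ``$($''; then $x\cover\min C$. Likewise define $y=\max C$ with position $a$ changed from ``$($'' to ``$)$''; then $\max C\cover y$. Now rerun the bracket matching on $x$. Because $(a,b)$ is top level, every pair strictly inside it is unaffected. The letter at $a$ now sees no ``$)$'' to close it before the unmatched ``$($''s to its right; this holds in $\min C$, where all unmatched letters are ``$($''. Hence positions $a$ and $b$ both become unmatched, and all other pairs are preserved. Thus $x$ is the minimum of the chain $C^+$ whose matched pairs are those of $C$ minus $(a,b)$.

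Symmetrically, in $y$ all unmatched letters of $\max C$ are ``$)$'', and the new ``$)$'' at $a$ has no unmatched ``$($'' to its left. So again $(a,b)$ dissolves and everything else stays matched. Hence $y$ is the maximum of the same chain $C^+$, and therefore $\min C^+\cover\min C$ and $\max C\cover\max C^+$.

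The main obstacle is checking that dissolving $(a,b)$ does not create any new matches with other unmatched letters. In $\min C$ every unmatched letter is ``$($''; after the change, the ``$($'' at $b$ and the ``$($'' at $a$ join that pool, and no extra ``$)$'' becomes free. So the only ``$)$'' affected is the former partner of $a$, which was removed. The case of $\max C$ is dual.
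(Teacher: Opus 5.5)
The paper gives no proof of this lemma. It is imported from Greene--Kleitman and Griggs--Killian--Savage and used as a black box in the proof of Theorem~\ref{thm:Dn-chain-cover}. Your argument is a correct self-contained replacement. The root-chain part is fine: a chain with $t$ matched pairs has length $n-2t$, and the only chain with $t=0$ runs from $\emptyset$ to $[n]$. For the cover property, you dissolve a top-level matched pair $(a,b)$ by setting $x=\min C\setminus\{b\}$ and $y=\max C\cup\{a\}$. These are indeed the minimum and maximum of the chain $C^+$ whose matched pairs are those of $C$ minus $(a,b)$.

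This is the same mechanism the paper uses one level up in Theorem~\ref{thm:Dn-chain-cover}. There, deleting an edge $e$ from $M$ (replacing $D$ by $HH$ at the bottom and by $VV$ at the top) passes from the root chain of $B_M$ to that of $B_{M\setminus\{e\}}$. The citation buys brevity; your version makes Section~3 self-contained and shows that one idea handles both levels of the decomposition.

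The step you should rewrite is the verification that the other pairs survive. You say that top-levelness leaves the pairs strictly inside $(a,b)$ unaffected, but that misplaces the hypothesis: inner pairs are never affected. What top-levelness excludes is a pair $(i,j)$ with $i<a<b<j$, and that exclusion is essential. Take the one-element chain $C=\{\{3,4\}\}$, i.e.\ the word $(())$, and dissolve the inner pair $(2,3)$. You get $x=\{4\}$, the word $((()$, whose only matched pair is $(3,4)$. You also get $y=\{2,3,4\}$, the word $()))$, whose only matched pair is $(1,2)$. So $x$ and $y$ lie in different chains. The same example shows that your remark that no extra right parenthesis becomes free does not by itself imply that the remaining pairs are preserved.

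The clean argument uses a standard fact about the greedy bracketing. A left parenthesis at $i$ and a right parenthesis at $j>i$ are matched if and only if the subword strictly between them is balanced. Since no pair encloses $(a,b)$, every other pair lies inside $(a,b)$ or entirely to one side of it. Its interior is therefore the same in $x$ and $y$ as in $C$, so it stays matched. In $x$ every right parenthesis is thus still paired with its old partner. This forces $a$, $b$ and all old unmatched positions to be unmatched left parentheses; the argument for $y$ is dual.

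Also drop the exploratory alternatives (``outermost, rightmost'', ``adjacent to the unmatched region'') and state the choice of a top-level pair once.
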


\begin{thm}
\label{thm:Dn-chain-cover}
The symmetric chain decomposition of $\cD_n$ in Proposition \ref{prop-SCD} has the chain cover property.
\end{thm}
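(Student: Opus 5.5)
The plan is to split the non-root chains of the decomposition in Proposition~\ref{prop-SCD} according to whether they are a root chain of their Boolean component $B_M$ or not. First, identify the root chains of $\cD_n$. A chain inside $B_M$ with $|M|=i$ has length at most $n-2i$. So the maximum length $n$ is attained only for $M=\emptyset$, by the Greene--Kleitman root chain of $B_\emptyset\cong\cB_n$, which runs from $H^n$ to $V^n$; this is the unique root chain of $\cD_n$. Every other chain $C$ lies in some $B_M$ and falls into one of two cases.

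\emph{Case 1: $C$ is not the root chain of its component $B_M$.} By Theorem~\ref{sbo}, $B_M\cong\cB_{n-2|M|}$, where the isomorphism records, on the free positions, $H$ as ``$($'' (absent) and $V$ as ``$)$'' (present). Within $B_M$ the construction of Proposition~\ref{prop-SCD} is exactly the Greene--Kleitman decomposition. Lemma~\ref{lem:GK-chain-cover} then gives a chain $C^+$ in the same component with $\min C^+\cover\min C$ and $\max C\cover\max C^+$ in $B_M$. A cover in $B_M$ changes one free $H$ into $V$, which is the elementary move $aHb\to aVb$. By Proposition~\ref{prop-poset} it is therefore a cover in $\cD_n$, and this case is done.

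\emph{Case 2: $C$ is the root chain of $B_M$ with $M\neq\emptyset$.} Here $\min C$ has every free position equal to $H$ and $\max C$ has every free position equal to $V$, with the $D$-steps in the positions given by $M$. Pick any edge $e\in M$ and set $M'=M\setminus\{e\}$. Let $q$ (resp.\ $q'$) be obtained from $\min C$ (resp.\ $\max C$) by replacing the $D$ at $e$ with $HH$ (resp.\ $VV$). Then $q\to\min C$ is the move $aHHb\to aDb$ and $\max C\to q'$ is the move $aDb\to aVVb$, so both are covers by Proposition~\ref{prop-poset}. Moreover, $q$ and $q'$ lie in $B_{M'}$, with all free positions equal to $H$, resp.\ $V$. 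These are the bottom and top of $B_{M'}$.

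It remains to check that $q$ and $q'$ lie on a common chain of the decomposition, namely the root chain of $B_{M'}$. In $q$ every non-$*$ symbol is ``$($'', so nothing is matched. By the construction, the chain through $q$ runs over all unmatched positions from all ``$($'' to all ``$)$'', and hence ends at $q'$. Taking $C^+$ to be this chain finishes Case 2.

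The only delicate point is this last step: confirming that the bracketing construction puts the bottom and top of $B_{M'}$ on the same chain. It follows immediately from the absence of matched pairs in $q$, so no real obstacle is expected.
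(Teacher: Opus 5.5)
Your proof is correct and follows essentially the same route as the paper. You split into chains that are not the root chain of their component $B_M$, handled by the Greene--Kleitman chain cover lemma, and root chains of $B_M$ with $M\neq\emptyset$, handled by deleting one edge $e\in M$ and taking $C^+$ to be the root chain of $B_{M\setminus\{e\}}$. Your extra checks---that covers inside $B_M$ are elementary moves in $\cD_n$, and that the bottom and top of $B_{M\setminus\{e\}}$ lie on a single bracketing chain---fill in details the paper leaves implicit.
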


\begin{proof}
Let $C$ be a non-root chain contained in a Boolean component $B_M$ of $\cD_n$.
If $C$ is not the root chain of the Greene-Kleitman decomposition of $B_M$,
then Lemma \ref{lem:GK-chain-cover} provides the required chain $C^+$ inside the same component.

Now suppose that $C$ is the root chain of the Greene-Kleitman decomposition of $B_M$.
The only component of Boolean rank $n$ is $B_{\emptyset}$.
Its root chain has length $n$, so it is also a root chain of the whole decomposition.
This implies that $M\neq\emptyset$.
Choose an edge $e\in M$ and set $M^+=M\backslash\{e\}$.
Let $C^+$ be the root chain of $B_{M^+}$.
Note that $\min C$ has a pair of $D$'s at the endpoints of $e$
and $H$ at every free position,
while $\min C^+$ is obtained from $\min C$ by replacing this $DD$ with $HH$.
Hence,
\[
\min C^+\cover\min C.
\]
Similarly, $\max C^+$ is obtained from $\max C$ by replacing this $DD$ with $VV$,
which implies $\max C\cover\max C^+$.
Therefore, the decomposition has the chain cover property.
\end{proof}

\subsection{Maximum antichains}

A subset $\sA$ of a poset $P$ is an \defn{antichain} if its elements are pairwise incomparable. 
An antichain is called a \defn{maximum antichain} if it has the maximum cardinality among all antichains of $P$.
We use Sperner's theorem together with its equality cases.

\begin{thm}[Sperner's theorem~\cite{Spe28}]
\label{thm:boolean-max-antichain}
Let $n$ be a nonnegative integer.
If $\sA$ is an antichain in the Boolean lattice $\cB_n$, then
\[
|\sA|
\le
\binom{n}{\lfloor n/2\rfloor}.
\]
Moreover, equality holds precisely in the following cases:
\begin{enumerate}[\rm(i)]
    \item If $n$ is even, then $\sA=(\cB_n)_{n/2}$.
    \item If $n$ is odd, then $\sA=(\cB_n)_{(n-1)/2}$ or $\sA=(\cB_n)_{(n+1)/2}$.
\end{enumerate}
\end{thm}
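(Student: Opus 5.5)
This is Sperner's theorem together with its equality cases, which the paper quotes from \cite{Spe28}. If a self-contained argument is wanted, I will prove it with the LYM inequality.

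\emph{Step 1 (the LYM inequality).} Count maximal chains $\emptyset\subset\{a_1\}\subset\cdots\subset[n]$, which correspond to the $n!$ permutations of $[n]$. A set $A$ of size $k$ lies on exactly $k!(n-k)!$ of them. An antichain meets each maximal chain at most once, so
\[
\sum_{A\in\sA} |A|!\,(n-|A|)! \le n!,
\qquad\text{equivalently}\qquad
\sum_{A\in\sA}\binom{n}{|A|}^{-1}\le 1 .
\]
Since $\binom{n}{|A|}\le\binom{n}{\floor{n/2}}$, this gives $|\sA|\le\binom{n}{\floor{n/2}}$.

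\emph{Step 2 (equality forces middle ranks).} Suppose $|\sA|=\binom{n}{\floor{n/2}}$. Then every term in the bound must be tight, so every $A\in\sA$ has $\binom{n}{|A|}=\binom{n}{\floor{n/2}}$. For even $n$ this means $|A|=n/2$, and since rank $n/2$ has exactly $\binom{n}{n/2}$ elements, $\sA$ is the whole middle rank. This settles case (i).

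\emph{Step 3 (odd $n$, the main obstacle).} For odd $n$, write $\sA=\sA_k\cup\sA_{k+1}$ with $k=(n-1)/2$. Equality in LYM says that every maximal chain meets $\sA$ exactly once. I will show that if $\sA_k\neq\emptyset$ and $\sA_{k+1}\neq\emptyset$, some chain misses $\sA$.

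Consider the bipartite graph between ranks $k$ and $k+1$, with edges given by inclusion. It is connected for $n\ge 3$: swapping one element at a time links any two $k$-sets through $(k+1)$-sets. So if both parts are nonempty, there is a $k$-set $A\notin\sA$ contained in a $(k+1)$-set $B\notin\sA$. To see this, walk along a path from a vertex in $\sA$ to one outside $\sA$ and use the antichain condition, which forbids $A\in\sA_k$ with $A\subset B\in\sA_{k+1}$.

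A maximal chain through both $A$ and $B$ then avoids rank $k$ and rank $k+1$ members of $\sA$ altogether, contradicting equality. Hence $\sA$ is one full middle rank, giving case (ii). The cases $n=1$ (and $n=0$, or $n=2$ in the even case) are checked directly.
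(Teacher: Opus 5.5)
The paper gives no proof of this statement. It quotes Sperner~\cite{Spe28} and uses the result as a black box, applying it to each component $B_M$ in Theorem~\ref{thm:unique-antichain-Dn}. Your LYM argument is therefore an independent, self-contained route, and it is sound. Steps 1 and 2 are complete. Getting the extremal cases from ``every maximal chain meets $\sA$ exactly once'' is the standard mechanism. Your Step 3 also parallels the paper's proof of Theorem~\ref{thm:unique-antichain-Dn}(ii), where a lower-middle element covered by an upper-middle element forces all components to choose the same level.

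One justification in Step 3 does not work as written. You obtain an inclusion $A\subset B$ of ranks $k,k+1$ with $A,B\notin\sA$ by walking ``from a vertex in $\sA$ to one outside $\sA$.'' Such a walk only yields an edge with one endpoint in $\sA$, which is useless. It also never uses the hypothesis that both $\sA_k$ and $\sA_{k+1}$ are nonempty, which must enter somewhere.

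The fix is a parity argument:
\begin{enumerate}[(1)]
\item Choose $A_0\in\sA_k$, $B_0\in\sA_{k+1}$, and a path $A_0=v_0,v_1,\dots,v_m=B_0$ in the connected bipartite inclusion graph. Then $m$ is odd.
\item The antichain condition forbids two consecutive $v_j$ from both lying in $\sA$.
\item If no two consecutive $v_j$ both lay outside $\sA$ either, membership would strictly alternate. Then $v_j\in\sA$ exactly for even $j$, so $v_m\notin\sA$, a contradiction.
\end{enumerate}
Equivalently, $\sA$ and its complement would form a proper $2$-colouring of a connected bipartite graph, so $\sA$ would be one side of the bipartition, contradicting that it meets both sides.

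With this repair the proof is complete, once you add the trivial converse needed for ``precisely'': the middle level(s) are antichains of size $\binom{n}{\floor{n/2}}$. Your separate small cases are unnecessary. Step 2 already covers every even $n$, and the inclusion graph is connected for $n=1$ too.
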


In the following we show that the Delannoy lattice $\cD_n$ satisfies analogous properties.

\begin{thm}\label{thm:unique-antichain-Dn}
Let $n$ be a nonnegative integer.
If $\sA$ is an antichain in the Delannoy lattice $\cD_n$, then $|\sA|\le d(n,\floor{n/2})$.
More precisely,
\begin{enumerate}[\rm(i)]
    \item If $n$ is even, then the unique maximum antichain is $(\cD_n)_{n/2}$.
    \item If $n$ is odd, then the only maximum antichains are $(\cD_n)_{(n-1)/2}$ and $(\cD_n)_{(n+1)/2}$.
\end{enumerate}
\end{thm}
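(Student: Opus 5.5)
The bound comes from the symmetric chain decomposition of Proposition~\ref{prop-SCD}, and the equality cases come from the symmetric Boolean decomposition of Theorem~\ref{sbo} combined with Sperner's theorem.

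\emph{The bound.} Every chain in the symmetric chain decomposition is symmetric and saturated, so it meets the middle rank $\floor{n/2}$. Each rank-$\floor{n/2}$ element lies in exactly one chain. Hence the number of chains equals $|(\cD_n)_{\floor{n/2}}|=d(n,\floor{n/2})$. An antichain meets each chain at most once, so $|\sA|\le d(n,\floor{n/2})$. The bound is attained by the middle rank(s); for odd $n$, self-duality (Proposition~\ref{prop:self-dual}) gives $d(n,(n-1)/2)=d(n,(n+1)/2)$.

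\emph{Equality cases.} Let $\sA$ be a maximum antichain. Intersect $\sA$ with each Boolean component $B_M\cong\cB_{n-2i}$, where $|M|=i$. Each $\sA\cap B_M$ is an antichain in $B_M$, so by Sperner's theorem $|\sA\cap B_M|\le\binom{n-2i}{\floor{(n-2i)/2}}$. Each component contributes exactly this many elements to the middle rank of $\cD_n$, because the rank in $\cD_n$ is $i$ plus the Boolean rank. Summing over $M$ gives the bound again. So equality forces every $\sA\cap B_M$ to be a maximum antichain of $B_M$, and Theorem~\ref{thm:boolean-max-antichain} then says each $\sA\cap B_M$ is a full middle level of $B_M$.

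\begin{enumerate}[(i)]
\item If $n$ is even, then $n-2i$ is even, the middle level of $B_M$ is unique and sits at $\cD_n$-rank $n/2$, and so $\sA=(\cD_n)_{n/2}$.
\item If $n$ is odd, each component $B_M$ independently chooses level $(n-1)/2$ or $(n+1)/2$ in $\cD_n$-rank. It remains to show all components make the same choice; this is the main obstacle.
\end{enumerate}

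\emph{Gluing the components for odd $n$.} Suppose $\sA$ contains the lower middle level of some $B_M$ and the upper middle level of some $B_{M'}$. I will produce comparable elements $p<q$ with $p$ in the lower level of one component and $q$ in the upper level of another. It suffices to treat components connected by an edge of an auxiliary graph on $\cM_n$, namely $M'=M\cup\{e\}$ or $M=M'\cup\{e\}$; this graph is connected since every matching reaches $\emptyset$ by deleting edges.

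\begin{itemize}
\item For $M'=M\cup\{e\}$: take $p\in B_M$ with $HH$ on the two vertices of $e$, at rank $(n-1)/2$. The move $HH\to D$ gives $p\cover p'$ with $p'\in B_{M'}$ at rank $(n+1)/2$. We also need $p'$ to lie in the upper middle level of $B_{M'}$, which holds because all ranks are measured in $\cD_n$. The only care needed is that free positions other than $e$ can be filled so that $p$ has rank exactly $(n-1)/2$. The other free positions number $n-2i-2$ (odd), and we need $(n-1)/2-i$ of them to be $V$. Since $0\le (n-1)/2-i\le n-2i-2$ whenever $n-2i\ge3$, this is possible; in fact $n-2i\ge 3$ always holds because $B_{M'}$ has $n-2i-2\ge1$ free positions when $n$ is odd.
\item The reverse orientation, $M=M'\cup\{e\}$, is handled by $D\to VV$ in the same way, or by duality.
\end{itemize}

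Either way $\sA$ would contain a comparable pair, a contradiction. Hence all components choose the same level, and $\sA$ is $(\cD_n)_{(n-1)/2}$ or $(\cD_n)_{(n+1)/2}$.
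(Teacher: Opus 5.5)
Your proof is correct and follows essentially the paper's route. Sperner's theorem on each Boolean component $B_M$ forces $\sA\cap B_M$ to be a full middle level of $B_M$. For odd $n$, a single cover relation that creates or destroys one $D$-step between components whose matchings differ by one edge rules out mixed choices.

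The differences are minor. You obtain the bound by chain counting in the symmetric chain decomposition; this is redundant, since your Sperner sum already gives it. You also glue via a bichromatic edge of the connected graph on $\cM_n$, treating both orientations with explicit rank bookkeeping. The paper instead uses self-duality to put $B_{\emptyset}$ on the lower level and takes a minimal counterexample index $i$, so that replacing any $D$ by $HH$ in any element of the offending upper level works with no counting needed.
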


\begin{proof}
For each $0\le i\le\lfloor n/2\rfloor$, let $\sB_i=\{B_M:~|M|=i\}$.
Then
\[
|\mathscr B_i|=\binom{n-i}{i}
\]
and every $B_M\in\mathscr B_i$ is isomorphic to $\cB_{n-2i}$, with bottom rank $i$ in $\cD_n$.

Let $\sA$ be a maximum antichain of $\cD_n$. 
Then for every $B_M\in\sB_i$, the intersection $B_M\cap\sA$ is an antichain in the induced Boolean lattice $B_M$.
Hence, Theorem~\ref{thm:boolean-max-antichain} implies
\begin{equation}\label{eq-BcapA}
    |B_M\cap\sA|
    \le \binom{n-2i}{\floor{(n-2i)/2}}.
\end{equation}
Summing over all components yields
\begin{equation}\label{eq-Aupper}
    |\sA|=\sum_{i=0}^{\floor{n/2}} \sum_{B_M\in\sB_i} |B_M\cap\sA|
    \le\sum_{i=0}^{\floor{n/2}} \binom{n-i}{i} \binom{n-2i}{\floor{(n-2i)/2}}.
\end{equation}
Clearly, $(\cD_n)_{\floor{n/2}}$ is an antichain.
We claim that,
by taking $\sA=(\cD_n)_{\floor{n/2}}$,
the upper bound in \eqref{eq-BcapA} is attained;
therefore, the upper bound in \eqref{eq-Aupper} is also attained.
Indeed, the elements of $B_M$ lying in $(\cD_n)_{\floor{n/2}}$ have rank $\floor{n/2}-i=\floor{(n-2i)/2}$ inside $B_M$,
which implies
\[
\left|B_M\cap(\cD_n)_{\floor{n/2}}\right|=\binom{n-2i}{\floor{(n-2i)/2}}.
\]
Thus, $(\cD_n)_{\floor{n/2}}$ is a maximum antichain.

It remains to prove that the maximum antichains of $\cD_n$ are precisely its middle levels.

(i)
Suppose that $n$ is even. 
Then by Theorem~\ref{thm:boolean-max-antichain}, the intersection $B_M\cap\sA$ is the unique middle level of $B_M$. 
This level has rank $i+(n-2i)/2=n/2$ in $\cD_n$. 
Therefore, $\sA=(\cD_n)_{n/2}$ is the unique antichain.

(ii)
Suppose that $n$ is odd. 
Then by Theorem~\ref{thm:boolean-max-antichain}, the intersection $B_M\cap\sA$ is either the lower middle level or the upper middle level of $B_M$.
These two levels have ranks $(n-1)/2$ and $(n+1)/2$ in $\cD_n$, respectively.
There is a unique Boolean subset $B_{\emptyset}$ with no diagonal steps. 
As the order-reversing involution in Proposition~\ref{prop:self-dual} interchanges the two middle levels of every Boolean subset, we may assume that $\sA$ chooses the lower middle level in $B_{\emptyset}$.

Suppose, for contradiction, that some Boolean subset chooses its upper middle level. 
Let $i\ge1$ be minimal such that a subset $B_M\in\mathscr B_i$ chooses its upper middle level, and choose $p\in B_M\cap\sA$. 
Then $r(p)=(n+1)/2$.
Since $p$ contains $i\ge1$ diagonal steps,
we can replace one occurrence of $D$ in $p$ by $HH$, and denote the resulting path by $p'$.
Thus, $p'$ lies in a Boolean subset $B_{M'}\in\mathscr B_{i-1}$.
By the minimality of $i$, the subset $B_{M'}$ chooses its lower middle level. 
By the cover relation \eqref{eq-cover} we have $p'\cover p$
and $r(p')=(n-1)/2$.
Then $p'$ belongs to the lower middle level, and hence $p'\in\sA$.
This contradicts the assumption that $\sA$ is an antichain,
since $p,p'\in\sA$ and $p'\cover p$.

Therefore, all Boolean subsets make the same choice;
they all choose their lower middle levels under our assumption.
So $\sA=(\cD_n)_{(n-1)/2}$.
Applying the order-reversing involution gives the other possibility $\sA=(\cD_n)_{(n+1)/2}$.
These are the only maximum antichains when $n$ is odd.
\end{proof}

\section{Supersolvability and M\"obius number}
\label{sec:supersolvable}

\subsection{Supersolvability}

Supersolvable lattices were introduced by Stanley~\cite{Sta72}.
Let $L$ be a finite lattice with rank function $r$.
A maximal chain $\Delta$ of $L$ is called an \defn{$M$-chain} if, for every chain $C$ of $L$, the sublattice $K$ generated by $C\cup \Delta$ is \defn{distributive}, that is,
\[
x\meet(y\join z)=(x\meet y)\join(x\meet z), \qquad
\forall x,y,z\in K.
\]
The lattice $L$ is \defn{supersolvable} if it has an $M$-chain.
An element $m\in L$ is \defn{rank-modular} if
\[
r(m)+r(x)=r(m\meet x)+r(m\join x), \qquad
\forall x\in L.
\]

We use the following rank-modular characterization of $M$-chains.

\begin{lem}[{\cite[Theorem 1.4]{FW22}}]
\label{prop:rank-modular-chain}
    Let $L$ be a finite graded lattice. 
    A maximal chain $\Delta$ of $L$ is an $M$-chain if and only if every element of $\Delta$ is rank-modular.
\end{lem}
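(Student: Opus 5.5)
The plan is to prove the two implications separately, passing through left-modularity for the harder one.

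\textbf{Forward direction.} Suppose $\Delta$ is an $M$-chain, and fix $m\in\Delta$ and $x\in L$. Let $K$ be the sublattice generated by $\Delta\cup\{x\}$, which is distributive by hypothesis. In a finite distributive lattice all maximal chains have the same length. Since $\Delta\subseteq K$ is a maximal chain of $L$, that common length equals $n=r(\hat 1)$.

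Now take any maximal chain of $K$ through a given element. It has $n$ steps, and each step strictly increases the rank $r$ of $L$. Hence each step raises $r$ by exactly one, so the rank function $r_K$ of $K$ agrees with $r$ on $K$. Distributive lattices are modular, so $r_K(m)+r_K(x)=r_K(m\meet x)+r_K(m\join x)$. Meets and joins in $K$ are those of $L$, so this is exactly rank-modularity of $m$ in $L$.

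\textbf{Converse direction.} Assume every $m\in\Delta$ is rank-modular. I would first show that each such $m$ is left-modular, i.e.\ $(x\join m)\meet y=x\join(m\meet y)$ whenever $x\le y$. Put $a=x\join(m\meet y)$ and $b=(x\join m)\meet y$, so $a\le b$ always. Then:
\begin{itemize}
\item $m\meet y\le a\le b\le y$, so $a\meet m=b\meet m=m\meet y$;
\item $x\le a\le b\le x\join m$, so $a\join m=b\join m=x\join m$.
\end{itemize}
Applying rank-modularity of $m$ to $a$ and to $b$ gives
\[
r(a)=r(a\meet m)+r(a\join m)-r(m)=r(b\meet m)+r(b\join m)-r(m)=r(b).
\]
Since $a\le b$ in a graded lattice, $r(a)=r(b)$ forces $a=b$. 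Thus every element of $\Delta$ is left-modular.

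\textbf{Main obstacle.} It remains to show that a maximal chain of left-modular elements in a finite graded lattice is an $M$-chain. This is the known theorem of Stanley, Liu and McNamara--Thomas; I would invoke it rather than reprove it. If a self-contained argument were needed, I would take a chain $C$, note that by the left-modular identity the elements generated by $C\cup\Delta$ are all of the form $\bigjoin_i(c_i\meet m_{j_i})$, and check distributivity on this explicit family. This is the step I expect to be the main obstacle.
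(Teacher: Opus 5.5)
Your argument is correct, but it does not follow the paper. The paper gives no proof of this lemma at all and simply imports it as Theorem~1.4 of [FW22]. You instead prove the forward direction directly and reduce the converse to the classical theory of left-modular elements.

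Your forward direction is self-contained and sound. It is worth stating explicitly that $\Delta$ remains a maximal chain of $K$ and that $K$ has the same $\hat 0,\hat 1$ as $L$. Then the rank function of $K$ is the restriction of $r$, and the valuation identity of the modular lattice $K$ is exactly rank-modularity in $L$.

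In the converse, your computation ($a\le b$, $a\meet m=b\meet m=m\meet y$, $a\join m=b\join m=x\join m$, hence $r(a)=r(b)$ and $a=b$) correctly shows that rank-modular elements of a graded lattice are left-modular. The theorem you then invoke exists in exactly the chain-specific form you need: in a finite graded lattice, a maximal chain of left-modular elements is itself an $M$-chain. This is due to Liu and McNamara via $S_n$ EL-labellings, and H.~Thomas gave a direct argument.

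What your route buys is transparency. Relative to the known left-modular characterization of $M$-chains in graded lattices, the lemma adds only elementary facts: your forward argument and the passage from rank-modularity to left-modularity. The paper's citation is shorter but opaque.

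The hard implication is outsourced in both cases, so do not count your closing sketch as a substitute proof. Showing that joins of elements $c_i\meet m_j$ are closed under meets and form a distributive lattice is the entire difficulty. That step must use gradedness. In the pentagon with $\hat 0<a<b<\hat 1$ and $\hat 0<c<\hat 1$, the chain $\hat 0<a<b<\hat 1$ consists of left-modular elements but is not an $M$-chain.
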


\begin{thm}
\label{thm:supersolvable}
    The Delannoy lattice $\cD_n$ is supersolvable.
    More precisely, the chain
    \[
    \Delta:\quad H^n=m_0\cover m_1\cover\cdots\cover m_n=V^n,
    \qquad m_i=V^iH^{n-i}
    \]
    is an $M$-chain.
\end{thm}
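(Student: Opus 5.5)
By Lemma~\ref{prop:rank-modular-chain}, it suffices to show that every $m_i=V^iH^{n-i}$ is rank-modular. That is, for every $x\in\cD_n$ we want
\[
r(m_i)+r(x)=r(m_i\meet x)+r(m_i\join x).
\]
The plan is to compute $m_i\join x$ and $m_i\meet x$ explicitly. I will describe the principal ideals and filters of $m_i$ combinatorially and avoid running the algorithm $\T$ in general.

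First, I describe the upper and lower sets of $m_i$. From \eqref{eq-U}, $\cU(Vb)=V\cU(b)$, and $\cU(H^{k})$ consists of all paths of size $k$. Hence $\cU(m_i)=V^i\cD_{n-i}$, the paths beginning with $V^i$. Dually, using $\sig$ from Proposition~\ref{prop:self-dual}, the elements below $m_i$ are exactly the paths of the form $wH^{n-i}$ with $w\in\cD_i$. In words, these are the paths whose "first $n-i$ units from the end" are all $H$.

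Second, I compute the join. Write $x$ and split it at unit position $i$: let $x=\alpha\beta$ where $\alpha$ covers the first $i$ units and $\beta$ the last $n-i$. If a $D$ straddles positions $i,i+1$, write $x=\alpha' D\beta'$ with $\alpha'$ of size $i-1$. Every upper bound of $x$ beginning with $V^i$ must push the first $i$ units up to $V$'s. I claim that $m_i\join x=V^i\beta$ in the non-straddling case and $V^{i}\,V\beta'$ in the straddling case. Indeed, $x\le V^i\beta$ by raising $\alpha$ to $V^i$, and any $u\ge x$ with prefix $V^i$ should satisfy $u\ge V^i\beta$, since moves on the suffix commute with those on the prefix. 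The straddling $D$ is forced to become $VV$. The rank is then $i+r(\beta)$, respectively $i+1+r(\beta')$.

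Dually, $m_i\meet x$ lowers the last $n-i$ units to $H$, keeping $\alpha$. Its rank is $r(\alpha)$ in the non-straddling case and $r(\alpha')$ in the straddling case, since the straddling $D$ must become $HH$. I would justify these via the self-duality $\sig$ applied to the join formula for $m_{n-i}$, or by a direct argument.

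Third, I check the rank equation. In the non-straddling case, $r(x)=r(\alpha)+r(\beta)$, so
\[
(i+r(\beta))+r(\alpha)=i+r(x).
\]
In the straddling case, $r(x)=r(\alpha')+1+r(\beta')$, so
\[
(i+1+r(\beta'))+r(\alpha')=i+r(x).
\]
Both equal $r(m_i)+r(x)$.

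The main obstacle is rigorously proving the "least" part of the join claim, namely that every upper bound of both $x$ and $m_i$ lies above $V^i\beta$. My first attempt is to induct on $i$ using \eqref{eq-U}. The base case is $\cU(V^i\cdots)$. For the inductive step, I would peel off the first letter of $x$. If it is $V$, the claim reduces to $n-1$ and $i-1$. If it is $H$, then $\UB(Hb,Vc)=V(\cU(b)\cap\cU(c))$ by case (i). If it is $D$, I would use case (iv) to replace $D$ by $VV$. The $D$ at the boundary is handled by case (iv) when $i=1$. This is essentially running $\T$ against $m_i$, which only ever uses cases (i) and (iv), so the computation stays simple.
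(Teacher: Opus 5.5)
Your proposal matches the paper's proof: you reduce via Lemma~\ref{prop:rank-modular-chain} to rank-modularity of each $m_i$ and split the path at unit position $i$ (endpoint there versus a straddling $D$). You then use $m_i\join x=V^i\beta$ or $V^{i+1}\beta'$ and $m_i\meet x=\alpha H^{n-i}$ or $\alpha' H^{n-i+1}$, and check ranks, exactly as the paper does, and your induction on $i$ via identities (i) and (iv) actually supplies the justification of the join formula that the paper only reads off from its algorithm. One small fix: $\sig$ alone sends $m_i$ to $H^iV^{n-i}$, not to $m_{n-i}$, so to transfer the join formula to the meet you should compose $\sig$ with word reversal (itself an automorphism, since the three elementary moves are reversal-invariant), or else argue directly from your description $\cD_iH^{n-i}$ of the lower set of $m_i$.
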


\begin{proof}
By Lemma~\ref{prop:rank-modular-chain}, it suffices to show that every $m_i\in\Delta$ is rank-modular. 
For any $p\in\cD_n$, we need to verify that
\[
r(m_i)+r(p)=r(m_i\meet p)+r(m_i\join p).
\]

If the path $p$ has a vertex on the line $x+y=i$,
then write
\[
p=ab, \qquad a\in\cD_i, \quad b\in\cD_{n-i}.
\]
By the meet and join formulas established above,
\[
m_i\meet p=aH^{n-i},
\qquad
m_i\join p=V^ib.
\]
Hence,
\[
r(m_i\meet p)+r(m_i\join p)=r(a)+(i+r(b))=i+(r(a)+r(b))=r(m_i)+r(p).
\]

If $p$ crosses the line $x+y=i$ in the interior of a diagonal step,
then write
\[
p=aDb, \qquad a\in\cD_{i-1}, \quad b\in\cD_{n-i-1}.
\]
In this case,
\[
m_i\meet p=aH^{n-i+1},
\qquad
m_i\join p=V^{i+1}b.
\]
Hence,
\[
r(m_i\meet p)+r(m_i\join p)=r(a)+(i+1+r(b))=i+(r(a)+r(b)+1)=r(m_i)+r(p).
\]
Thus every $m_i\in\Delta$ is rank-modular.
By Lemma~\ref{prop:rank-modular-chain}, $\Delta$ is an $M$-chain.
Therefore, $\cD_n$ is supersolvable.
\end{proof}

Bj\"orner \cite{Bjo80} proved that the proper part of a finite supersolvable lattice is shellable,
and shellability implies the Cohen-Macaulay property for its order complex.
Then the following is an immediate consequence.

\begin{cor}
    For $n\ge2$, the order complex of the proper part $\cD_n\backslash\{\hat{0},\hat{1}\}$ is shellable and Cohen--Macaulay.
\end{cor}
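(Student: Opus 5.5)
The corollary follows by chaining Theorem~\ref{thm:supersolvable} with Bj\"orner's theorem~\cite{Bjo80} and the standard implication from shellability to Cohen--Macaulayness. Concretely, I would argue as follows.

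First, Theorem~\ref{thm:supersolvable} shows that $\cD_n$ is a finite supersolvable lattice with $M$-chain $\Delta=\{V^iH^{n-i}\}_{i=0}^n$. Bj\"orner's theorem then says that the order complex of the proper part $\overline{L}=L\setminus\{\hat0,\hat1\}$ of any finite supersolvable lattice is shellable. Supersolvable lattices are in fact EL-shellable, and the labeling comes from the $M$-chain. Here $\hat0=H^n$ and $\hat1=V^n$ by Proposition~\ref{prop-poset}.

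The restriction $n\ge2$ is needed only so that the proper part is nonempty and the statement is not vacuous. For $n=0$ the lattice is a single point, and for $n=1$ it is the two-element chain, whose proper part is empty. For $n\ge2$ the proper part contains rank-one elements such as $VH^{n-1}$. Since $\cD_n$ is graded of rank $n$, the order complex of $\overline{\cD_n}$ is pure of dimension $n-2$, as every maximal chain of the proper part has $n-1$ elements.

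Finally, a pure shellable complex is Cohen--Macaulay over every field. This is a standard fact (see e.g.\ Bj\"orner's or Stanley's treatments): the shelling exhibits the complex as homotopy equivalent to a wedge of top-dimensional spheres, and every link of a shellable complex is again shellable, so the homology vanishes below the top dimension for the complex and all its links, which is Reisner's criterion. Purity and shellability of intervals are inherited in a supersolvable lattice as well, since intervals of supersolvable lattices are supersolvable.

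There is no real obstacle. The only point requiring care is that Bj\"orner's result applies to supersolvable lattices as defined by Stanley~\cite{Sta72}, and that definition is exactly the one used here, via Lemma~\ref{prop:rank-modular-chain}.
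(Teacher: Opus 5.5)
Your proposal is correct and follows the same route as the paper. Theorem~\ref{thm:supersolvable} together with Bj\"orner's theorem gives shellability of the order complex of the proper part, and a pure shellable complex is Cohen--Macaulay; your remarks on purity (dimension $n-2$) and on the role of $n\ge2$ are accurate additions that the paper leaves implicit.
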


\begin{rem}\label{rem-notdis}
Every finite distributive lattice is supersolvable: the distributive identities are inherited by sublattices, so every maximal chain is an $M$-chain. 
However, $\cD_n$ is not distributive for $n\ge2$. 
For example, the interval $[H^n,VVH^{n-2}]$ has three intermediate elements
\[
x=HVH^{n-2},\qquad y=VHH^{n-2},\qquad z=DH^{n-2},
\]
which implies
\[
x\meet(y\join z)=HVH^{n-2}\neq H^n=(x\meet y)\join(x\meet z).
\]
The case $n=2$ is shown in Figure~\ref{fig:D2-M3}.
\end{rem}

\begin{figure}[htbp]
\centering
\tikzset{
    cover/.style={
        draw=blue!70!black,
        line width=1pt,
        shorten <=2pt,
        shorten >=2pt
    },
    pathline/.style={
        draw=black,
        line width=1pt,
        line cap=round,
        line join=round
    },
    dot/.style={
        circle,
        fill=black,
        inner sep=1.5pt
    },
    pathbox/.style={
        rectangle,
        draw=none,
        fill=none,
        inner sep=7pt
    }
}
\tikzset{
pics/HH/.style={code={
    \draw[pathline] (-.7,0)--(0,0)--(.7,0);
    \node[dot] at (-.7,0) {};
    \node[dot] at (0,0) {};
    \node[dot] at (.7,0) {};
}},
pics/HV/.style={code={
    \draw[pathline] (-.35,-.35)--(.35,-.35)--(.35,.35);
    \node[dot] at (-.35,-.35) {};
    \node[dot] at (.35,-.35) {};
    \node[dot] at (.35,.35) {};
}},
pics/VH/.style={code={
    \draw[pathline] (-.35,-.35)--(-.35,.35)--(.35,.35);
    \node[dot] at (-.35,-.35) {};
    \node[dot] at (-.35,.35) {};
    \node[dot] at (.35,.35) {};
}},
pics/D/.style={code={
    \draw[pathline] (-.35,-.35)--(.35,.35);
    \node[dot] at (-.35,-.35) {};
    \node[dot] at (.35,.35) {};
}},
pics/VV/.style={code={
    \draw[pathline] (0,-.7)--(0,0)--(0,.7);
    \node[dot] at (0,-.7) {};
    \node[dot] at (0,0) {};
    \node[dot] at (0,.7) {};
}}
}

\newcommand{\dpathnode}[3]{%
    \node[pathbox] (#1) at #2 {%
        \begin{tikzpicture}[baseline=-0.5ex]
            \pic {#3};
        \end{tikzpicture}%
    };%
}

\scalebox{0.85}{
\begin{tikzpicture}

\dpathnode{B0}{(0,0)}{HH}

\dpathnode{B1L}{(-2.2,2.2)}{HV}
\dpathnode{B1M}{(0,2.2)}{D}
\dpathnode{B1R}{(2.2,2.2)}{VH}

\dpathnode{B2}{(0,4.7)}{VV}

\draw[cover] (B0.north)--(B1L.south);
\draw[cover] (B0.north)--(B1M.south);
\draw[cover] (B0.north)--(B1R.south);

\draw[cover] (B1L.north)--(B2.south);
\draw[cover] (B1M.north)--(B2.south);
\draw[cover] (B1R.north)--(B2.south);

\draw[red!70!black, line width=1.2pt, rounded corners]
    (B0.south west) rectangle (B0.north east);

\draw[red!70!black, line width=1.2pt, rounded corners]
    (B1L.south west) rectangle (B1L.north east);

\draw[red!70!black, line width=1.2pt, rounded corners]
    (B1M.south west) rectangle (B1M.north east);

\draw[red!70!black, line width=1.2pt, rounded corners]
    (B1R.south west) rectangle (B1R.north east);


\node[red!70!black] at (-4.2,1.2) {$x=x\meet(y\join z)=HV$};
\node[red!70!black] at (-0.66,1.2) {$z=D$};
\node[red!70!black] at (2.7,1.2) {$y=VH$};
\node[red!70!black] at (0,-0.7) {$(x\meet y)\join(x\meet z)=HH$};

\end{tikzpicture}
}
\caption{A counterexample to distributivity in $\cD_2$.}
\label{fig:D2-M3}
\end{figure}

\begin{rem}
Every distributive lattice is modular, but the converse need not hold.
For $n\ge3$, the lattice $\cD_n$ is not modular.
For example, let
\[
m=HDH^{n-3},\qquad x=DHH^{n-3}.
\]
Then
\[
m\meet x=H^n,\qquad m\join x=VVVH^{n-3},
\]
and therefore
\[
r(m)+r(x)=2
\neq
3=r(m\meet x)+r(m\join x).
\]
Thus $m$ is not rank-modular.
Figure~\ref{fig:D3-nonmodular} shows the case $n=3$.
\end{rem}

\begin{figure}[htbp]
\centering
\tikzset{
    cover/.style={
        draw=blue!70!black,
        line width=1pt,
        shorten <=2pt,
        shorten >=2pt
    },
    pathline/.style={
        draw=black,
        line width=1pt,
        line cap=round,
        line join=round
    },
    dot/.style={
        circle,
        fill=black,
        inner sep=1.5pt
    },
    pathbox/.style={
        rectangle,
        draw=none,
        fill=none,
        inner sep=7pt
    }
}

\tikzset{
pics/HHH/.style={code={
    \draw[pathline] (-1.05,0)--(-.35,0)--(.35,0)--(1.05,0);
    \foreach \x in {-1.05,-.35,.35,1.05}
        \node[dot] at (\x,0) {};
}},
pics/HHV/.style={code={
    \draw[pathline] (-.7,-.35)--(0,-.35)--(.7,-.35)--(.7,.35);
    \node[dot] at (-.7,-.35) {};
    \node[dot] at (0,-.35) {};
    \node[dot] at (.7,-.35) {};
    \node[dot] at (.7,.35) {};
}},
pics/HVH/.style={code={
    \draw[pathline] (-.7,-.35)--(0,-.35)--(0,.35)--(.7,.35);
    \node[dot] at (-.7,-.35) {};
    \node[dot] at (0,-.35) {};
    \node[dot] at (0,.35) {};
    \node[dot] at (.7,.35) {};
}},
pics/VHH/.style={code={
    \draw[pathline] (-.7,-.35)--(-.7,.35)--(0,.35)--(.7,.35);
    \node[dot] at (-.7,-.35) {};
    \node[dot] at (-.7,.35) {};
    \node[dot] at (0,.35) {};
    \node[dot] at (.7,.35) {};
}},
pics/HD/.style={code={
    \draw[pathline] (-.7,-.35)--(0,-.35)--(.7,.35);
    \node[dot] at (-.7,-.35) {};
    \node[dot] at (0,-.35) {};
    \node[dot] at (.7,.35) {};
}},
pics/DH/.style={code={
    \draw[pathline] (-.7,-.35)--(0,.35)--(.7,.35);
    \node[dot] at (-.7,-.35) {};
    \node[dot] at (0,.35) {};
    \node[dot] at (.7,.35) {};
}},
pics/HVV/.style={code={
    \draw[pathline] (-.35,-.7)--(.35,-.7)--(.35,0)--(.35,.7);
    \node[dot] at (-.35,-.7) {};
    \node[dot] at (.35,-.7) {};
    \node[dot] at (.35,0) {};
    \node[dot] at (.35,.7) {};
}},
pics/VHV/.style={code={
    \draw[pathline] (-.35,-.7)--(-.35,0)--(.35,0)--(.35,.7);
    \node[dot] at (-.35,-.7) {};
    \node[dot] at (-.35,0) {};
    \node[dot] at (.35,0) {};
    \node[dot] at (.35,.7) {};
}},
pics/VVH/.style={code={
    \draw[pathline] (-.35,-.7)--(-.35,0)--(-.35,.7)--(.35,.7);
    \node[dot] at (-.35,-.7) {};
    \node[dot] at (-.35,0) {};
    \node[dot] at (-.35,.7) {};
    \node[dot] at (.35,.7) {};
}},
pics/DV/.style={code={
    \draw[pathline] (-.35,-.7)--(.35,0)--(.35,.7);
    \node[dot] at (-.35,-.7) {};
    \node[dot] at (.35,0) {};
    \node[dot] at (.35,.7) {};
}},
pics/VD/.style={code={
    \draw[pathline] (-.35,-.7)--(-.35,0)--(.35,.7);
    \node[dot] at (-.35,-.7) {};
    \node[dot] at (-.35,0) {};
    \node[dot] at (.35,.7) {};
}},
pics/VVV/.style={code={
    \draw[pathline] (0,-1.05)--(0,-.35)--(0,.35)--(0,1.05);
    \foreach \y in {-1.05,-.35,.35,1.05}
        \node[dot] at (0,\y) {};
}}
}

\newcommand{\dpathnode}[3]{%
    \node[pathbox] (#1) at #2 {%
        \begin{tikzpicture}[baseline=-0.5ex]
            \pic {#3};
        \end{tikzpicture}%
    };%
}

\scalebox{0.72}{
\begin{tikzpicture}

\dpathnode{C0}{(0,0)}{HHH}

\dpathnode{C1a}{(-4.8,2.4)}{HHV}
\dpathnode{C1b}{(-2.4,2.4)}{HVH}
\dpathnode{C1c}{(0,2.4)}{VHH}
\dpathnode{C1d}{(2.4,2.4)}{HD}
\dpathnode{C1e}{(4.8,2.4)}{DH}

\dpathnode{C2a}{(-4.8,5.2)}{HVV}
\dpathnode{C2b}{(-2.4,5.2)}{VHV}
\dpathnode{C2c}{(0,5.2)}{VVH}
\dpathnode{C2d}{(2.4,5.2)}{DV}
\dpathnode{C2e}{(4.8,5.2)}{VD}

\dpathnode{C3}{(0,8.5)}{VVV}

\draw[cover] (C0.north)--(C1a.south);
\draw[cover] (C0.north)--(C1b.south);
\draw[cover] (C0.north)--(C1c.south);
\draw[cover] (C0.north)--(C1d.south);
\draw[cover] (C0.north)--(C1e.south);

\draw[cover] (C1a.north)--(C2a.south);
\draw[cover] (C1a.north)--(C2b.south);
\draw[cover] (C1a.north)--(C2d.south);

\draw[cover] (C1b.north)--(C2a.south);
\draw[cover] (C1b.north)--(C2c.south);

\draw[cover] (C1c.north)--(C2b.south);
\draw[cover] (C1c.north)--(C2c.south);
\draw[cover] (C1c.north)--(C2e.south);

\draw[cover] (C1d.north)--(C2a.south);
\draw[cover] (C1d.north)--(C2e.south);

\draw[cover] (C1e.north)--(C2c.south);
\draw[cover] (C1e.north)--(C2d.south);

\draw[cover] (C2a.north)--(C3.south);
\draw[cover] (C2b.north)--(C3.south);
\draw[cover] (C2c.north)--(C3.south);
\draw[cover] (C2d.north)--(C3.south);
\draw[cover] (C2e.north)--(C3.south);

\draw[red!70!black, line width=1.2pt, rounded corners]
    (C0.south west) rectangle (C0.north east);

\draw[red!70!black, line width=1.2pt, rounded corners]
    (C1d.south west) rectangle (C1d.north east);

\draw[red!70!black, line width=1.2pt, rounded corners]
    (C1e.south west) rectangle (C1e.north east);

\draw[red!70!black, line width=1.2pt, rounded corners]
    (C3.south west) rectangle (C3.north east);

\node[red!70!black] at (0,-0.75) {$m\meet x=HHH$};
\node[red!70!black] at (0.9,1.45) {$m=HD$};
\node[red!70!black] at (5.5,1.45) {$x=DH$};
\node[red!70!black] at (0,10.2) {$m\join x=VVV$};

\end{tikzpicture}
}

\caption{A counterexample to modularity in $\cD_3$.}
\label{fig:D3-nonmodular}
\end{figure}

\subsection{The M\"obius number}
\label{subsec:mobius}


The \defn{M\"obius function} $\mu$ of a finite poset $P$ is defined recursively by $\mu(s,s)=1$ and
\[
\mu(s,t)=-\sum_{s\le u<t}\mu(s,u), \quad s<t.
\]
If $P$ has a minimum element $\hat 0$ and a maximum element $\hat 1$, then
\[
\mu(P):=\mu_P(\hat 0,\hat 1)
\]
is called the \defn{M\"obius number} of $P$.

An \defn{edge-labeling} of $P$ is a map $\lam$ that assigns an integer to each cover relation.
The label sequence of a saturated chain $C: x_0\cover x_1\cover\cdots\cover x_k$ is
\[
\lambda(C)=\bigl(\lambda(x_0,x_1),\lambda(x_1,x_2),\ldots,\lambda(x_{k-1},x_k)\bigr).
\]
We say that the chain $C$ is \defn{increasing} (resp. \defn{decreasing}) if its label sequence is weakly increasing (resp. decreasing). 
An edge-labeling is called an \defn{$R$-labeling} if every interval has a unique increasing saturated chain.

\begin{lem}[{\cite[Section 3.14]{Sta12}}]
\label{thm:R-labeling-mobius}
Let $P$ be a supersolvable lattice of rank $n$,
and let $\hat 0=m_0\cover m_1\cover\cdots\cover m_n=\hat 1$ be an $M$-chain.
For $s\cover t$, define
\begin{equation}\label{eq-label}
    \lambda(s,t)=\min\{i:s\join m_i=t\join m_i\}.
\end{equation}
Then $\lam$ is an $R$-labeling with label set $[n]$,
and the labels along every maximal chain form a permutation of $[n]$.
Moreover,
\[
\mu(P)=(-1)^n\cdot\#\{\text{strictly decreasing maximal chains of }P\}.
\]
\end{lem}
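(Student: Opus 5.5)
This is a standard result (Stanley, \emph{Enumerative Combinatorics I}, \S3.14), and I would reprove it from the rank-modular characterization in Lemma~\ref{prop:rank-modular-chain}. Fix an $M$-chain $\hat 0=m_0\cover\cdots\cover m_n=\hat 1$, so every $m_i$ is rank-modular.

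\textbf{The label set and permutation property.} For any $x\in P$, rank-modularity gives
\[
r(x\join m_i)=r(x)+r(m_i)-r(x\meet m_i).
\]
I will also use that the intervals $[m_{i-1},m_i]$ have length one. Together these should give that the increments $r(x\join m_i)-r(x\join m_{i-1})$ lie in $\{0,1\}$, with total $n-r(x)$.

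For a cover $s\cover t$, the chain $s\join m_i$ lies weakly below $t\join m_i$, and the two agree at $i=n$. So the label $\lambda(s,t)\in[n]$ is well defined. I would then show that $\lambda(s,t)$ is the unique index $i$ at which $t\join m_\bullet$ has a zero increment while $s\join m_\bullet$ has a unit increment. That is, passing from $s$ to $t$ deletes exactly one ``jump index'' from the set
\[
J(x)=\{i: x\join m_{i-1}<x\join m_i\},
\]
and the deleted index is $\lambda(s,t)$. Since $J(\hat 0)=[n]$ and $J(\hat 1)=\emptyset$, the labels along any maximal chain then form a permutation of $[n]$.

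\textbf{The $R$-labeling property.} For an interval $[s,t]$, I would consider the elements
\[
c_i=s\join(m_i\meet t),\qquad 0\le i\le n.
\]
These form a weakly increasing chain from $s$ to $t$. Using rank-modularity again, consecutive $c_i$ differ in rank by at most one, so after deleting repetitions the $c_i$ give a saturated chain. The edge from $c_{i-1}$ to $c_i$ should carry label exactly $i$, so this chain is increasing. For uniqueness, I would argue that an increasing saturated chain in $[s,t]$ is forced at each step. With the labels seen so far, the next label must be the least index still available in the appropriate jump set, and this pins down the chain as the $c$-chain.

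\textbf{The M\"obius formula.} Since labels along each chain are distinct, reversing the order on labels turns the unique increasing chain of every interval into a unique decreasing one, which gives a dual $R$-labeling. The standard $R$-labeling theorem (via Philip Hall's theorem and counting chains with prescribed descent set) states
\[
\mu(\hat 0,\hat 1)=(-1)^n\beta([n]),
\]
where $\beta([n])$ is the number of maximal chains whose label sequence has descent set $[n-1]$. Because the labels are a permutation, these are exactly the strictly decreasing maximal chains, which gives the stated formula.

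\textbf{Main obstacle.} I expect the hardest step to be the uniqueness of the increasing chain in an arbitrary interval, not just in $[\hat 0,\hat 1]$. This needs the $M$-chain to restrict well to intervals. I would first try showing that the images $s\join(m_i\meet t)$ again form a chain of rank-modular elements in $[s,t]$. If that works, the interval case reduces to the global case.
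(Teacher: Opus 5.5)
Your rank-modular, jump-set approach differs from the cited source. The paper gives no proof of this lemma and only quotes Stanley; the classical argument reduces to the distributive sublattice generated by the $M$-chain and a chain. Most of your outline is sound.

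\textbf{What goes through.} For $s\cover t$ one has $r(t\join m_i)-r(s\join m_i)=1$ for $i<\lam(s,t)$ and $0$ for $i\ge\lam(s,t)$. This gives $J(t)=J(s)\setminus\{\lam(s,t)\}$ and hence the permutation property.

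The $c$-chain also works, but ``rank-modularity again'' hides the real reason:
\begin{enumerate}
\item The inequalities $m_{i-1}\meet t\le c_{i-1}\meet m_{i-1}\le c_i\meet m_{i-1}\le t\meet m_{i-1}$ force $c_i\meet m_{i-1}=c_{i-1}\meet m_{i-1}$.
\item Rank-modularity of $m_{i-1}$ then gives $r(c_i)-r(c_{i-1})=r(c_i\join m_{i-1})-r(s\join m_{i-1})\le r(s\join m_i)-r(s\join m_{i-1})\le 1$.
\item When $c_{i-1}\neq c_i$, this shows $c_{i-1}\join m_{i-1}\neq c_i\join m_{i-1}$, while $c_{i-1}\join m_i=c_i\join m_i=s\join m_i$. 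So the label is exactly $i$.
\end{enumerate}

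\textbf{The gap: uniqueness.} The real gap is uniqueness of the increasing chain in an arbitrary interval $[s,t]$, which is the substance of the $R$-labeling claim; you flag it but leave it open. Your heuristic that ``the next label must be the least available index, and this pins down the chain'' is not enough, because a label does not determine a cover. For example, in $\cD_2$ with $m_1=VH$, both $HH\cover HV$ and $HH\cover D$ have label $2$. You have to use $t$.

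Here is one way to close it inside your framework.
\begin{enumerate}
\item Let $s=y_0\cover\cdots\cover y_k=t$ be increasing. The jump-set property makes its labels distinct, so $\lam_1<\cdots<\lam_k$ and $\{\lam_1,\dots,\lam_k\}=J(s)\setminus J(t)$.
\item The first $j$ labels are $\le\lam_j$, so $y_j\join m_{\lam_j}=s\join m_{\lam_j}$, and hence $y_j\le w_j:=(s\join m_{\lam_j})\meet t$.
\item Since $s\le w_j\le s\join m_{\lam_j}$, every $y\in[s,w_j]$ has $y\join m_{\lam_j}=s\join m_{\lam_j}$. So every cover in $[s,w_j]$ has label $\le\lam_j$.
\item Extend a saturated chain from $s$ to $w_j$ up to $t$. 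Its (distinct) labels in the lower part lie in $\{\lam_1,\dots,\lam_j\}$. Hence $r(w_j)\le r(s)+j=r(y_j)$, so $y_j=w_j$, which depends only on $s$, $t$, $j$.
\end{enumerate}

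\textbf{Your fallback route.} Restricting the $M$-chain to $[s,t]$ would need one more check you did not mention. You must show that $\lam$ on $[s,t]$ agrees, up to an order-preserving relabeling, with the labeling induced by the new chain. This uses the modular law $y\join(m_i\meet t)=(y\join m_i)\meet t$ for $y\le t$, which is easiest to get from the distributive-sublattice definition.

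\textbf{M\"obius step.} The label reversal is unnecessary, and $\beta([n])$ should read $\beta([n-1])$. The $R$-labeling theorem gives $\mu(P)=(-1)^n\beta([n-1])$ directly, and descent set $[n-1]$ means strictly decreasing.
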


\begin{thm}
\label{thm:mobius-Dn}
For every $n\ge0$, the M\"obius number of $\cD_n$ is
\[
\mu(\cD_n)
=(-1)^nF_{n+1},
\]
where $(F_n)_{n\ge0}$ is the Fibonacci sequence with $F_0=0$ and $F_1=1$.
\end{thm}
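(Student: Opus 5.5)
The plan is to apply Lemma~\ref{thm:R-labeling-mobius} to the $M$-chain $\Delta$ of Theorem~\ref{thm:supersolvable}, where $m_i=V^iH^{n-i}$. This reduces $\mu(\cD_n)$ to $(-1)^n$ times the number of maximal chains whose labels read $n,n-1,\dots,1$. Since the labels along every maximal chain form a permutation of $[n]$, strictly decreasing means exactly this sequence. It then remains to show that this count $f(n)$ satisfies the Fibonacci recurrence.

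\textbf{Step 1: compute the labels \eqref{eq-label} explicitly.} We use the join formulas from the proof of Theorem~\ref{thm:supersolvable}. If $p=ab$ with $a\in\cD_i$, then $p\join m_i=V^ib$; if $p=aDb$ with $D$ straddling the line $x+y=i$, then $p\join m_i=V^{i+1}b$. Index the positions $1,\dots,n$ as in the graph $G(p)$. We check the three kinds of covers one at a time.
\begin{enumerate}[(a)]
\item For $H\to V$ at position $j$: the joins with $m_i$ agree if and only if $i\ge j$, so $\lam=j$.
\item For $HH\to D$ at positions $j,j+1$: at $i=j$ the joins are $V^jHb$ versus $V^{j+1}b$, which differ. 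At $i=j+1$ both equal $V^{j+1}b$. For $i<j$ they clearly differ, so $\lam=j+1$.
\item For $D\to VV$ at positions $j,j+1$: at $i=j$ both joins equal $V^{j+1}b$, and for smaller $i$ they differ, so $\lam=j$.
\end{enumerate}
In short, the label is the position of the rightmost vertex changed for the first two move types, and the leftmost vertex for the last.

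\textbf{Step 2: count decreasing chains.} A decreasing maximal chain starts at $H^n$ with a cover labelled $n$. By Step~1 there are exactly two such covers.
\begin{enumerate}[(a)]
\item $H^n\cover H^{n-1}V$. Position $n$ is now a $V$ and is never touched again. The remaining labels $n-1,\dots,1$ then correspond bijectively to decreasing maximal chains of $\cD_{n-1}$ acting on the prefix. This gives $f(n-1)$ chains.
\item $H^n\cover H^{n-2}D$. The next label must be $n-1$. An $H\to V$ move at position $n-1$ is impossible, since that position is covered by the $D$. An $HH\to D$ move ending at $n-1$ is impossible for the same reason. Hence the next step is forced to be $D\to VV$, giving $H^{n-2}VV$. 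The rest is then a decreasing chain of $\cD_{n-2}$, which gives $f(n-2)$ chains.
\end{enumerate}
Together with $f(0)=f(1)=1$, this yields $f(n)=f(n-1)+f(n-2)$, so $f(n)=F_{n+1}$ and $\mu(\cD_n)=(-1)^nF_{n+1}$.

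\textbf{Main obstacle.} The hard part is the justification in Step~2 that, once the suffix $V$ or $VV$ is fixed, the rest of the chain is exactly a decreasing chain of the smaller lattice. I would prove this by observing two things. First, a suffix of $V$'s is never altered by later covers. Second, by Step~1 the labels of covers acting within the prefix are computed exactly as in $\cD_{n-1}$ or $\cD_{n-2}$, because the label depends only on the positions changed. I would also double-check the boundary cases of the join formula, where the line $x+y=i$ meets the path at its endpoints.
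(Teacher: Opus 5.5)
Your proposal is correct and follows essentially the same route as the paper. Both apply Lemma~\ref{thm:R-labeling-mobius} to the $M$-chain $m_i=V^iH^{n-i}$ and note that a decreasing chain must start with either $H\to V$ at the last position or the forced pair $HH\to D\to VV$ at the last two positions; the paper encodes this as $\{1,2\}$-compositions of $n$, and your recurrence $f(n)=f(n-1)+f(n-2)$ is the same count. Your explicit label rule ($j$ for $H\to V$ at position $j$, $j+1$ for $HH\to D$ at positions $j,j+1$, and $j$ for $D\to VV$ at positions $j,j+1$) is accurate, and it supplies the verification the paper dismisses as ``not hard to verify.''
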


\begin{proof}
We first apply Lemma \ref{thm:R-labeling-mobius} to the $M$-chain
\[
\Delta:\quad H^n=m_0\cover m_1\cover\cdots\cover m_n=V^n,
\qquad m_i=V^iH^{n-i}
\]
in Theorem \ref{thm:supersolvable} and interpretate each strictly decreasing maximal chain of $\cD_n$ as a composition of $n$ whose parts are $1$ or $2$.

Every strictly decreasing maximal chain has exactly $n$ cover relations,
so its label sequence must be $(n,n-1,\ldots,1)$.
We read such a chain from $H^n$ to $V^n$.
Suppose that the last $k$ steps have been changed to vertical steps
and the current path is $H^{n-k}V^k$ for $0\le k\le n-1$.
Then the next required lable is $n-k$.
It is not hard to verify that there are exactly two possibilities for the next cover relation.
One may replace the rightmost remaining $H$ by $V$, i.e.,
\[
H^{n-k}V^k\cover H^{n-k-1}V^{k+1}.
\]
In this case, we record a part $1$.
Alternatively, one may replace the two rightmost remaining $H$'s by $D$, i.e.,
\[
H^{n-k}V^k\cover H^{n-k-2}DV^k.
\]
The next required lable is $n-k-1$,
which gives rise to the cover relation
\[
H^{n-k-2}DV^k\cover H^{n-k-2}V^{k+2}.
\]
In this case, we record a part $2$.
Repeating this procedure leads to a composition of $n$ with parts $1$ and $2$.
Conversely, given such a composition,
scan its parts from left to right.
For a part $1$, change the rightmost $H$ to $V$,
while for a part $2$, perform $HH\cover D\cover VV$ on the two rightmost $H$'s.
Then the above argument implies that the reaulting lables are $n,n-1,\dots,1$.
Thus the resulting maximal chain is strictly decreasing.

Next, we consider the enumeration.
Let $a_n$ be the number of $\{1,2\}$-compositions of $n$.
Removing the last part of a composition yields $a_n=a_{n-1}+a_{n-2}$ with $a_0=a_1=1$.
Thus $a_n=F_{n+1}$ as desired.
Therefore,
\[
\mu(\cD_n)=(-1)^n a_n=(-1)^n F_{n+1}
\]
by Lemma \ref{thm:R-labeling-mobius}.
\end{proof}

\begin{exa}
For $n=4$, the composition $(1,2,1)$ corresponds to the strictly decreasing maximal chain
\[
HHHH \xto{4} HHHV \xto{3} HDV \xto{2} HVVV \xto{1} VVVV,
\]
where $\xto{*}$ indicates the cover relations with label $*$ above it.
\end{exa}

\section{Characteristic and zeta polynomials}

\subsection{Characteristic polynomials}
Let $P$ be a finite graded poset of rank $n$ with a minimum element $\hat 0$.
The \defn{characteristic polynomial} of $P$ is defined as
\[
\chi_P(\lambda)=\sum_{x\in P}\mu(\hat 0,x)\lambda^{n-r(x)}.
\]

We use the following recurrence due to Greene \cite{Gre88}.

\begin{prop}[\cite{Gre88}]
\label{prop:Greene}
Let $L$ be a finite graded lattice of rank $n$,
and let $b$ be a coatom ($b\cover\hat{1}$) of $L$.
Then
\[
\chi_L(\lambda)=\lambda\,\chi_{[\hat 0,b]}(\lambda)
-
\sum_{\substack{x>\hat 0\\ x\meet b=\hat 0}}
\chi_{[x,\hat 1]}(\lambda).
\]
\end{prop}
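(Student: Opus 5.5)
The plan is to expand $\chi_L(\lambda)=\sum_{z\in L}\mu(\hat0,z)\lambda^{n-r(z)}$ and split the sum according to whether $z\le b$ or $z\not\le b$. The first part should give the term $\lambda\,\chi_{[\hat0,b]}(\lambda)$ directly. The second part should be rewritten, via the dual form of Weisner's theorem, as minus the double sum hidden in $\sum_{x>\hat0,\,x\meet b=\hat0}\chi_{[x,\hat1]}(\lambda)$.

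\emph{Step 1 (the part $z\le b$).} Since $b$ is a coatom, $[\hat0,b]$ is graded of rank $n-1$ with the rank function inherited from $L$. Hence
\[
\sum_{z\le b}\mu(\hat0,z)\lambda^{n-r(z)}=\lambda\sum_{z\le b}\mu(\hat0,z)\lambda^{(n-1)-r(z)}=\lambda\,\chi_{[\hat0,b]}(\lambda).
\]
This is the only place where the hypothesis $r(b)=n-1$ is used.

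\emph{Step 2 (a Weisner-type identity).} I will use the dual of Weisner's theorem, which follows from the usual statement applied to the dual lattice: in a finite lattice $K$ with top $\hat1_K$ and any $c<\hat1_K$, one has $\sum_{y\in K,\ y\meet c=\hat0_K}\mu(y,\hat1_K)=0$.

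Fix $z\in L$ and apply this identity in $K=[\hat0,z]$ with $c=b\meet z$. The condition $c<z$ holds exactly when $z\not\le b$. For $y\le z$ we have $y\meet c=y\meet b$. Separating the term $y=\hat0$ then gives, for every $z\not\le b$,
\[
\mu(\hat0,z)=-\sum_{\substack{\hat0<y\le z\\ y\meet b=\hat0}}\mu(y,z).
\]
For $z\le b$ the right-hand sum is empty, because any $y\le z$ with $y\meet b=\hat0$ satisfies $y=y\meet b=\hat0$.

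\emph{Step 3 (swapping sums).} For $x\in L$, the interval $[x,\hat1]$ has rank $n-r(x)$ and rank function $r(z)-r(x)$. Hence $\chi_{[x,\hat1]}(\lambda)=\sum_{z\ge x}\mu(x,z)\lambda^{n-r(z)}$. Summing over $x>\hat0$ with $x\meet b=\hat0$ and interchanging the order of summation gives
\[
\sum_{\substack{x>\hat0\\ x\meet b=\hat0}}\chi_{[x,\hat1]}(\lambda)=\sum_{z\in L}\lambda^{n-r(z)}\sum_{\substack{\hat0<x\le z\\ x\meet b=\hat0}}\mu(x,z).
\]
By Step 2, the inner sum vanishes when $z\le b$ and equals $-\mu(\hat0,z)$ when $z\not\le b$. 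So the whole expression equals $-\sum_{z\not\le b}\mu(\hat0,z)\lambda^{n-r(z)}$. Adding Step 1 yields the formula in the statement.

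The only real obstacle is Step 2: stating the correct dual form of Weisner's theorem and checking that restricting to $[\hat0,z]$ with $c=b\meet z$ turns the condition $y\meet c=\hat0$ into $y\meet b=\hat0$. If a self-contained argument is preferred, I would prove the dual Weisner identity from the standard one, $\sum_{x\join a=\hat1}\mu(\hat0,x)=0$ for $a>\hat0$, by passing to the dual lattice. That standard identity follows from Möbius inversion applied to $\sum_{x\le w}\mu(\hat0,x)=\delta_{\hat0,w}$ over the elements $w\ge a$.
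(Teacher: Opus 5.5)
Your proof is correct and complete. The paper itself gives no proof of this proposition; it quotes it from Greene and uses it as a black box when computing $\chi_n(\lambda)$, so there is no in-paper argument to compare against.

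Your route is to split $\chi_L$ according to $z\le b$ or $z\not\le b$, apply the dual Weisner identity in $[\hat0,z]$ with $c=b\wedge z$, and then swap sums. Each step checks out:
\begin{itemize}
\item $c<z$ holds exactly when $z\not\le b$.
\item $y\wedge c=y\wedge b$ for $y\le z$.
\item The inner sum is empty when $z\le b$.
\item $\chi_{[x,\hat1]}(\lambda)=\sum_{z\ge x}\mu(x,z)\lambda^{n-r(z)}$, because intervals of a graded lattice are graded with the inherited rank.
\end{itemize}
Your sketch of Weisner's theorem is also sound. It implicitly uses that $x\vee a\le w$ if and only if $x\le w$ whenever $w\ge a$. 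With that, $f(w)=\sum_{x\vee a=w}\mu(\hat0,x)$ has vanishing partial sums on $[a,\hat1]$, hence vanishes identically.

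Your approach makes the identity self-contained and slightly more general. The coatom hypothesis enters only in Step 1, so the same computation gives
\[
\chi_L(\lambda)=\lambda^{n-r(b)}\chi_{[\hat0,b]}(\lambda)-\sum_{x>\hat0,\ x\wedge b=\hat0}\chi_{[x,\hat1]}(\lambda)
\]
for every $b\in L$, with no modularity assumption on $b$. In the paper, rank-modularity of $b=V^{n-1}H$ is used only afterwards, to show that the indexing set consists of atoms. It plays no role in the identity itself, which your argument makes explicit.
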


\begin{thm}
    Let $\chi_n(\lam)$ be the characteristic polynomial of $\cD_n$.
    Then
    \begin{equation}\label{eq-char-formula}
        \chi_n(\lambda)=\sum_{k=0}^{\floor{n/2}} (-1)^k \binom{n-k}{k} (\lambda-1)^{n-k}.
    \end{equation}
\end{thm}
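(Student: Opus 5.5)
The plan is to derive a three-term recurrence for $\chi_n(\lambda)$ from Greene's formula (Proposition~\ref{prop:Greene}), applied to a well-chosen coatom. I then check that the right-hand side of \eqref{eq-char-formula} satisfies the same recurrence and initial values. Throughout, put $t=\lambda-1$. The base cases are $\chi_0=1$ and $\chi_1=\lambda-1=t$, and both agree with \eqref{eq-char-formula}.

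\textbf{Step 1: the lower interval.} I take the coatom $b=V^{n-1}H$, which satisfies $b\cover V^n$. I will think of each elementary move in \eqref{eq-cover} as acting on positions $1,\dots,n$ of the path graph $G(p)$. Going downward, a position occupied by $H$ stays occupied by $H$. Hence every element below $b$ has the form $aH$ with $a\in\cD_{n-1}$. Conversely, $aH\le V^{n-1}H$ for every such $a$, and the map $a\mapsto aH$ preserves covers in both directions. Therefore $[\hat0,b]\cong\cD_{n-1}$, and the first term of Greene's formula is $\lambda\chi_{n-1}(\lambda)$.

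\textbf{Step 2: the complements of $b$.} I determine all $x>\hat0$ with $x\meet b=\hat0$.
\begin{enumerate}[(a)]
\item If $x$ ends with $H$, then $x\le b$, so $x\meet b=x>\hat0$.
\item If $x=aV$, then $aH$ lies below both $x$ and $b$. So the meet is $\hat0$ only if $a=H^{n-1}$.
\item If $x=aD$, then $aHH$ lies below both. So the meet is $\hat0$ only if $a=H^{n-2}$.
\end{enumerate}
Both survivors, $H^{n-1}V$ and $H^{n-2}D$, do have meet $\hat0$ with $b$, since they are atoms not below $b$. Hence the sum in Proposition~\ref{prop:Greene} has exactly two terms (only one when $n=1$).

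\textbf{Step 3: the upper intervals.} Using \eqref{eq-U}, we have $\cU(H^{n-1}V)=\cU(H^{n-1})V$, so $[H^{n-1}V,\hat1]\cong\cD_{n-1}$. For the other interval, the $H$-rule in \eqref{eq-U} never merges the final $H$ of $H^{n-2}$ with the following $D$, and $\cU(D)=\{D,VV\}$. Hence $[H^{n-2}D,\hat1]\cong\cD_{n-2}\times\mathbf 2$, where $\mathbf 2$ is the two-element chain. The characteristic polynomial is multiplicative on products with $\hat0$, and $\chi_{\mathbf 2}=\lambda-1$. Substituting everything into Greene's formula gives
\[
\chi_n=\lambda\chi_{n-1}-\chi_{n-1}-(\lambda-1)\chi_{n-2}=t\,(\chi_{n-1}-\chi_{n-2}),\qquad n\ge2.
\]

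\textbf{Step 4: verification.} Let $f_n=\sum_k(-1)^k\binom{n-k}{k}t^{n-k}$. Using Pascal's rule $\binom{n-k}{k}=\binom{n-1-k}{k}+\binom{n-1-k}{k-1}$, the first piece gives $t f_{n-1}$. In the second piece, shifting $k\to k+1$ gives $-t f_{n-2}$. Hence $f_n$ satisfies the same recurrence and base values as $\chi_n$, so $\chi_n=f_n$.

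The main obstacle is Steps 2--3: showing rigorously that no other complements of $b$ exist and that the interval above $H^{n-2}D$ really splits as a product. Both rest on careful positional reasoning with \eqref{eq-U}, and the $H$-before-$D$ boundary needs particular care.
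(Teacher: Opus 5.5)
Your proposal is correct and follows essentially the same route as the paper: Greene's recurrence at the coatom $b=V^{n-1}H$, the identifications $[\hat0,b]\cong\cD_{n-1}$, $[H^{n-1}V,\hat1]\cong\cD_{n-1}$ and $[H^{n-2}D,\hat1]\cong\cD_{n-2}\times\cB_1$, and the resulting recurrence $\chi_n=(\lambda-1)(\chi_{n-1}-\chi_{n-2})$. The differences are only local. You find the complements of $b$ directly from the common lower bounds $aH$ and $aHH$, a self-contained argument, whereas the paper uses rank-modularity of $b=m_{n-1}$ from Theorem~\ref{thm:supersolvable} to force $x$ to be an atom. You also verify the closed form by Pascal's rule rather than by the generating function.
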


\begin{proof}
We first prove that $\chi_n(\lam)$ satisfies the recurrence
\begin{equation}\label{eq-char-rec}
    \chi_n(\lambda)=(\lambda-1)\bigl(\chi_{n-1}(\lambda)-\chi_{n-2}(\lambda)\bigr),
    \qquad n\ge 2,
\end{equation}
with $\chi_0(\lambda)=1$ and $\chi_1(\lambda)=\lambda-1$.

Let $b=V^{n-1}H$.
This is a coatom of $\cD_n$ since $b\cover\hat{1}=V^n$.
Then an element lies in $[\hat 0,b]$ if and only if it ends in $H$.
Deleting this final $H$ gives an isomorphism $[\hat 0,b]\cong \cD_{n-1}$.
Hence, $\chi_{[\hat 0,b]}(\lambda)=\chi_{n-1}(\lambda)$.
Theorem~\ref{thm:supersolvable} implies that $b$ is rank-modular.
So for each $x>\hat 0$ with $x\meet b=\hat 0$ we have
\[
r(x)+r(b)=r(x\meet b)+r(x\join b)=r(x\join b).
\]
Since $b$ is a coatom, $r(b)=n-1$ and $r(x\join b)\le n$.
Then
\[
r(x)+n-1=r(x\join b)\le n.
\]
Hence, $r(x)=1$, i.e., $x$ must be an atom.
Since $x>\hat 0$ and $x\meet b=\hat 0$, we have $x\notin[\hat 0,b]$.
It follows that the path $x$ does not end in $H$.
Thus $x=H^{n-1}V$ or $x=H^{n-2}D$.

If $x=H^{n-1}V$, then an element lies in $[x,\hat 1]$ if and only if it ends in $V$.
Deleting this final $V$ gives an isomorphism $[x_1,\hat 1]\cong\cD_{n-1}$.
Thus, $\chi_{[x_1,\hat 1]}(\lambda)=\chi_{n-1}(\lambda)$.
If $x=H^{n-2}D$, then elements in $[x,\hat 1]$ must end in $D$ or have $VV$ as the last two steps.
So $[x,\hat 1]\cong\cD_{n-2}\times\cB_1$,
where $\cB_1\cong\{D<VV\}$.
It is known that the characteristic polynomial of the Cartesian product is the product of those of the two posets.
Note that $\chi_{\cB_1}(\lam)=\lam-1$.
Hence, $\chi_{[x,\hat 1]}(\lambda)=(\lambda-1)\chi_{n-2}(\lambda)$.
Therefore, applying Proposition~\ref{prop:Greene} yields
\[
\begin{aligned}
    \chi_n(\lambda)
    =
    \lambda\chi_{n-1}(\lambda)
    -\chi_{n-1}(\lambda)
    -(\lambda-1)\chi_{n-2}(\lambda)
    =
    (\lambda-1)
    \bigl(\chi_{n-1}(\lambda)-\chi_{n-2}(\lambda)\bigr).
\end{aligned}
\]
The initial conditions follow from $\cD_0\cong\cB_0$ and $\cD_1\cong\cB_1$.

Next, by taking generating function of the recurrence \eqref{eq-char-rec}, we obtain
$$
\sum_{n\ge0}\chi_n(\lambda)x^n=\frac{1}{1-(\lambda-1)x+(\lambda-1)x^2}.
$$
Then
\[
\sum_{n\ge0}\chi_n(\lambda)x^n
=\frac{1}{1-(\lambda-1)x(1-x)}
=\sum_{m\ge0} (\lambda-1)^m x^m(1-x)^m.
\]
Extracting the coefficients of $x^n$ on both sides yields \eqref{eq-char-formula}.
\end{proof}

Suppose that $f(x)$ and $g(x)$ are two polynomials that have only real zeros.
Let $(r_i)_{1\le i\le n}$ and $(s_i)_{1\le i\le m}$ be all roots of $f(x)$ and $g(x)$ in weakly decreasing order, respectively.
Say that $g(x)$ \defn{interlaces} $f(x)$ if $m\le n\le m+1$ and $r_{i+1}\le s_i\le r_i$ for all $i$.

\begin{thm}
\label{thm:char-real-rooted}
The characteristic polynomial $\chi_n(\lambda)$ has only real zeros, all lying in $[1,\infty)$. 
More precisely,
\[
\chi_n(\lambda)
=
(\lambda-1)^{\lceil n/2\rceil}
\prod_{j=1}^{\lfloor n/2\rfloor}
\left(
\lambda-1-4\cos^2\frac{j\pi}{n+1}
\right).
\]
In particular, $\chi_{n-1}(\lambda)$ interlaces $\chi_n(\lambda)$ for all $n\ge1$.
\end{thm}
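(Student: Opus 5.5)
Our plan is to reduce $\chi_n$ to a Chebyshev-type (Fibonacci) polynomial through the substitution $\mu=\lambda-1$ and then use the known trigonometric factorization. Write $\mu=\lambda-1$. By \eqref{eq-char-formula},
\[
\chi_n(\lambda)=\sum_{k=0}^{\floor{n/2}}(-1)^k\binom{n-k}{k}\mu^{n-k}.
\]
Set $\mu=t^2$ formally, with $t>0$ when $\mu>0$. Factoring out $t^{n}$ gives
\[
\chi_n=t^{n}\sum_{k}(-1)^k\binom{n-k}{k}t^{n-2k}=t^nU_n(t/2),
\]
where $U_n$ is the Chebyshev polynomial of the second kind. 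We will use the classical identity $U_n(x/2)=\sum_k(-1)^k\binom{n-k}{k}x^{n-2k}$. Alternatively, we can verify the identity directly from \eqref{eq-char-rec}: writing $\chi_n=\mu^{n/2}f_n(t)$, the recurrence becomes $f_n=tf_{n-1}-f_{n-2}$ with $f_0=1$ and $f_1=t$, which is exactly the recurrence of $U_n(t/2)$.

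Next we use the root factorization $U_n(x/2)=\prod_{j=1}^{n}\bigl(x-2\cos\frac{j\pi}{n+1}\bigr)$. The roots come in pairs $\pm2\cos\frac{j\pi}{n+1}$ for $1\le j\le\floor{n/2}$, plus the root $x=0$ when $n$ is odd. Hence
\[
U_n(t/2)=t^{\,n-2\floor{n/2}}\prod_{j=1}^{\floor{n/2}}\Bigl(t^2-4\cos^2\frac{j\pi}{n+1}\Bigr).
\]
Multiplying by $t^n$ gives the power $t^{2n-2\floor{n/2}}$ in front of the product, and $2n-2\floor{n/2}=2\lceil n/2\rceil$, so this power equals $\mu^{\lceil n/2\rceil}$. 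Resubstituting $t^2=\mu=\lambda-1$ yields the stated product. To keep everything polynomial in $\mu$ and avoid square roots, we can instead define $g_n(\mu):=\chi_n$ and check directly that the right-hand product satisfies \eqref{eq-char-rec}; the cleaner route, though, is to compare both sides as polynomial identities in $t$. Since all the roots $1$ and $1+4\cos^2\frac{j\pi}{n+1}$ are at least $1$, they are real and lie in $[1,\infty)$.

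For interlacing, we list the roots of $\chi_n$ in decreasing order. These are the values $1+4\cos^2\frac{j\pi}{n+1}$ for $j=1,\dots,\floor{n/2}$, which are decreasing in $j$ and strictly greater than $1$, followed by $1$ with multiplicity $\lceil n/2\rceil$. The roots of $\chi_{n-1}$ are $1+4\cos^2\frac{j\pi}{n}$ for $j\le\floor{(n-1)/2}$, followed by $1$ with multiplicity $\lceil (n-1)/2\rceil$. The degrees are $n$ and $n-1$, as required. For the nontrivial roots we need
\[
\frac{j\pi}{n+1}\le\frac{j\pi}{n}\le\frac{(j+1)\pi}{n+1},
\]
which holds because $j\le n$, together with the monotonicity of $\cos^2$ on $[0,\pi/2]$. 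The main obstacle is the index bookkeeping at the boundary between the nontrivial roots and the block of $1$'s. When $n$ is even, $\floor{(n-1)/2}=n/2-1$, so the last nontrivial root of $\chi_n$ must be compared against a $1$ of $\chi_{n-1}$; this is fine since $1\le r_{n/2}$ and the next root $r_{n/2+1}$ equals $1$. When $n$ is odd, the counts of nontrivial roots coincide, and we need $j\le (n-1)/2$ so that $(j+1)\pi/(n+1)\le\pi/2$. After that, the remaining roots all equal $1$, so the inequalities hold trivially.
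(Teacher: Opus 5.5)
Your proof is correct and follows the paper's route. You substitute $\lambda-1=t^2$, identify $\chi_n(1+t^2)=t^nU_n(t/2)$, and pair the roots $\pm 2\cos\frac{j\pi}{n+1}$ of $U_n(t/2)$ to get the factorization. The only difference is in the interlacing step. You check $r_{i+1}\le s_i\le r_i$ directly from $\frac{j\pi}{n+1}\le\frac{j\pi}{n}\le\frac{(j+1)\pi}{n+1}$ and handle the block of roots equal to $1$ explicitly, whereas the paper cites the known interlacing of $U_{n-1}$ and $U_n$ and transports it through $x\mapsto x^2$. Your version makes explicit the boundary bookkeeping that the paper leaves implicit.
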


\begin{proof}
Set $y=\lambda-1$ and $p_n(y)=\chi_n(y+1)$.
By Corollary~\ref{cor:char-explicit},
\[
p_n(y)=\sum_{k=0}^{\floor{n/2}} (-1)^k \binom{n-k}{k} y^{n-k}.
\]
Let $U_n(x)$ denote the Chebyshev polynomial of the second kind. 
Since
\[
U_n(x/2)=\sum_{k=0}^{\floor{n/2}} (-1)^k \binom{n-k}{k} x^{n-2k},
\]
we have $p_n(x^2)=x^nU_n(x/2).$
The zeros of $U_n(x/2)$ are $2\cos\frac{j\pi}{n+1}$ for $1\le j\le n$.
Pairing the conjugate zeros and then setting $y=x^2$ gives
\[
p_n(y)=y^{\lceil n/2\rceil} \prod_{j=1}^{\floor{n/2}} \left(y-4\cos^2\frac{j\pi}{n+1}\right).
\]
Substituting $y=\lambda-1$ implies the factorization and shows that all zeros lie in $[1,\infty)$.

It is known that the Chebyshev polynomials $U_{n-1}(x)$ interlaces $U_n(x)$.
After pairing conjugate zeros, applying $x\mapsto x^2$, and adjoining the multiplicities at $y=0$,
these interlacing inequalities become interlacing for $p_{n-1}(y)$ and $p_n(y)$.
Therefore, $\chi_{n-1}(\lambda)$ interlaces $\chi_n(\lambda)$
since translation by one preserves the interlacing inequalities.
\end{proof}

\subsection{Zeta polynomials}

Let $P$ be a finite poset.
For an integer $s\ge2$, define $Z(P,s)$ to be the number of multichains $t_1\le\cdots\le t_{s-1}$ in $P$.
We call $Z(P,s)$ the \defn{zeta polynomial} of $P$, regarded as a function of $s$.
Write $Z_n(s)=Z(\cD_n,s)$.

\begin{thm}\label{thm-zeta-explicit}
    Let $Z_n(s)$ be the zeta polynomial of $\cD_n$. then
    \begin{equation}\label{eq-zeta-formula}
        Z_n(s)=\sum_{k=0}^{\floor{n/2}} \binom{n-k}{k} s^{n-2k} \binom{s}{2}^k.
    \end{equation}
\end{thm}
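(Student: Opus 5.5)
Our approach is to count multichains $t_1\le\cdots\le t_{s-1}$ in $\cD_n$ directly, by decomposing the paths along their $D$-steps. The key observation is a concatenation structure. If $p\le q$ in $\cD_n$, then every vertex of $q$ on a line $x+y=j$ lies on some path through which $p$ also passes through that line or straddles it by a $D$. In fact the elementary moves in \eqref{eq-cover} never destroy or create a break at position $j$ except by merging $HH\to D$ across $j$ or splitting $D\to VV$. So we encode a multichain position by position: for each position $j\in\{1,\dots,n\}$ of $G_n$, record the step occupying it in each $t_m$.

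The plan proceeds in four steps.

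\emph{Step 1: two-step evolution.} Along a multichain, the content of any single position evolves monotonically through $H\to V$. A pair of consecutive positions $\{j,j+1\}$ can instead evolve through $HH\to D\to VV$. Once a $D$ occupying $\{j,j+1\}$ appears at some index, the earlier paths have $HH$ there and the later ones have $VV$. The reason is that the $D$ must have been created by $HH\to D$ and can only be destroyed by $D\to VV$; after that, $V$'s are frozen, since no move changes a $V$ except none.

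\emph{Step 2: the data of a multichain.} Let $M$ be the set of edges $\{j,j+1\}$ that are occupied by a $D$ in at least one $t_m$. By Step 1, $M$ is a matching in $\cM_n$, since two $D$'s sharing a vertex would force conflicting histories. The multichain is then determined by two pieces of data. For each free vertex of $M$, we record the switching index from $H$ to $V$, which gives $s$ choices (a threshold in $\{1,\dots,s\}$). For each edge in $M$, we record the nonempty interval of indices where it is $D$, that is, a pair $1\le a\le b\le s-1$. The number of such pairs is $\binom{s}{2}$, corresponding to choosing $a<b+1$ in $\{1,\dots,s\}$.

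\emph{Step 3: the converse.} Conversely, any such data yields a multichain. Each $t_m$ is a valid Delannoy path of size $n$, and consecutive terms are related by elementary moves applied at disjoint positions. Hence $t_m\le t_{m+1}$ by transitivity.

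\emph{Step 4: the count.} Summing over matchings of size $k$, of which there are $\binom{n-k}{k}$ as in Theorem~\ref{sbo}, each contributes $s^{n-2k}\binom{s}{2}^k$. This gives \eqref{eq-zeta-formula}.

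The main obstacle is Step 1 and Step 3 taken together, namely showing that $p\le q$ holds if and only if the positionwise description is consistent. In other words, comparability decomposes positionwise once one accounts for the $D$-pairs. For Step 1, we would argue by induction on the length of a chain of elementary moves, observing that each move alters only one position or one adjacent pair, and that the allowed local transitions $H\to V$, $HH\to D$, $D\to VV$ are the only possible ones. Since $V$ is terminal and a $D$ can only arise from $HH$, the global comparison $t_m\le t_{m+1}$ reduces to local consistency on each position and each edge. This is essentially the same fact used implicitly in the Boolean decomposition $B_M$. As a fallback, one could instead derive a recurrence by conditioning on the last step of $t_{s-1}$ in analogy with the proof of \eqref{eq-char-rec}, obtaining $Z_n=sZ_{n-1}+\binom{s}{2}Z_{n-2}$, and then verify that the right side of \eqref{eq-zeta-formula} satisfies the same recurrence with $Z_0=1$ and $Z_1=s$.
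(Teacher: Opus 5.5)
Your argument is correct, but it takes a different route from the paper.

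\textbf{The paper's route.} The paper conditions on the final step(s) of the paths in the multichain. If no path ends in $D$, it deletes the last letter and records a weakly increasing word in $H<V$. Otherwise the last two positions carry a weakly increasing sequence in $HH<D<VV$ containing $D$. This gives $Z_n(s)=sZ_{n-1}(s)+\binom{s}{2}Z_{n-2}(s)$, and the closed form is then extracted from the generating function $1/(1-sx-\binom{s}{2}x^2)$. That is exactly your fallback.

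\textbf{Your route.} You decompose all $n$ positions at once. A multichain is encoded by the matching $M\in\cM_n$ of edges that are diagonal in some $t_m$, a threshold at each unmatched vertex, and a nonempty diagonal interval on each edge of $M$. Because this is a bijection, the formula is read off term by term with no generating functions. The factor $\binom{n-k}{k}$ gets the same meaning as in Theorem~\ref{sbo}, with $M$ the union of the $M(t_m)$ rather than a single one; for instance, $HH\le D\le VV$ meets both $B_\emptyset$ and $B_{\{12\}}$.

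\textbf{What each approach costs.} The price of your approach is that you must really prove the positional description of the order. With states $H<D<V$ at each position, $p\le q$ if and only if the states weakly increase positionwise and every position diagonal in both $p$ and $q$ has the same partner. Your induction on elementary moves gives the forward direction, and your Step~3 gives the converse. The paper's recurrence tacitly needs the same fact: that $t\le t'$ passes to the truncated paths, and that if one path ends in $D$ the others end in $HH$ or $VV$. So your version makes explicit what that proof leaves unstated.

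\textbf{One wording fix.} In Step~1, paths before the diagonal interval carry $HH$ and those after carry $VV$, but the $D$ itself may persist over several consecutive indices. This is what you correctly use in Step~2.
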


\begin{proof}
For $s\ge2$, we first prove that $Z_n(s)$ satisfies the recurrence
\begin{equation}\label{eq-zeta-rec}
    Z_n(s)=sZ_{n-1}(s)+\binom{s}{2}Z_{n-2}(s), \qquad n\ge2,
\end{equation}
with $Z_0(s)=1$ and $Z_1(s)=s$.
Let
\[
\boldsymbol{t}=\{t_1\le t_2\le\cdots\le t_{s-1}\}
\]
be a multichain in $\cD_n$.
We distinguish two cases according to the terminal steps of the Delannoy paths in $\boldsymbol{t}$.

Suppose first that none of $t_1,\dots,t_{s-1}$ ends in $D$. 
Then every path in $\boldsymbol{t}$ ends in either $H$ or $V$. 
Deleting the final step from every path gives a multichain in $\cD_{n-1}$.
The deleted steps form a weakly increasing sequence of length $s-1$ in the two-element chain $H<V$.
Such a sequence is determined by the number of initial $H$'s,
which may be any integer from $0$ to $s-1$.
So there are $s$ possible sequences.
Hence, this case contributes $sZ_{n-1}(s)$.

Suppose now that some path in $\boldsymbol{t}$ ends in $D$.
Then the final step(s) of $t_1,\dots,t_{s-1}$ are $HH$, $D$, or $VV$.
Deleting these steps from every path gives a multichain in $\cD_{n-2}$.
The deleted block form a weakly increasing sequence of length $s-1$ in the chain $HH<D<VV$
that contains at least one $D$.
Thus the sequence has the form
\[
\underbrace{HH,\ldots,HH}_{i}<\underbrace{D,\ldots,D}_{j}<\underbrace{VV,\ldots,VV}_{k},
\]
where
\[
i,k\ge0,\quad j\ge1,\quad i+j+k=s-1.
\]
There are $\binom{(s-2)+3-1}{s-2}=\binom{s}{2}$ such triples $(i,j,k)$.
Thus, this case contributes $\binom{s}{2}Z_{n-2}(s)$.

The two cases are disjoint and exhaustive, so the recurrence holds for every positive integer $s\ge2$,
and hence identically.
The initial values are immediate.

Then by taking generating function of the recurrence \eqref{eq-zeta-rec}, we derive
$$
\sum_{n\ge0} Z_n(s)x^n=\frac{1}{1-sx-\binom{s}{2}x^2}.
$$
Hence,
\[
\sum_{n\ge0} Z_n(s)x^n=\sum_{m\ge0} x^m \left(s+\binom{s}{2}x\right)^m.
\]
Extracting the coefficients of $x^n$ on both sides gives \eqref{eq-zeta-formula}.
\end{proof}

\begin{rem}
Note that $Z(P,2)=|P|$ for every finite poset $P$.
Setting $s=2$ in Theorem \ref{thm-zeta-explicit} gives
\[
|\cD_n|=\sum_{k=0}^{\floor{n/2}} \binom{n-k}{k} 2^{n-2k}.
\]
So $|\cD_n|=P_{n+1}$, where $P_n$ is the $n$th Pell number with $P_1=1$ and $P_2=2$.
It is known \cite[Proposition 3.12.1]{Sta12} that $Z(P,-1)=\mu(P)$.
Setting $s=-1$ in Theorem \ref{thm-zeta-explicit} gives
\[
\mu(\cD_n)=\sum_{k=0}^{\floor{n/2}} \binom{n-k}{k} (-1)^{n-2k}
=(-1)^n \sum_{k=0}^{\floor{n/2}} \binom{n-k}{k}
=(-1)^nF_{n+1},
\]
where $F_n$ is the $n$th Fibonacci number.
This agrees with Theorem~\ref{thm:mobius-Dn}.
\end{rem}

\section{Further questions}

Let $P$ be a finite graded poset of rank $r(P)$.
Recall that $P_k$ denotes its rank $k$ layer.
Let
\[
\Gamma(X)=\{y\in P:x\cover y\text{ for some }x\in X\}.
\]
We say that $P$ is \defn{normal},
or $P$ has the \defn{normalized matching property},
if for every $X\subseteq P_k$,
\begin{equation}\label{eq-normal}
    \frac{|\Gamma(X)|}{|P_{k+1}|}\ge\frac{|X|}{|P_k|},
    \qquad 0\le k< r(P).
\end{equation}

Anderson and Griggs independently proved that every rank-symmetric, rank-unimodal normal poset admits a symmetric chain decomposition~\cite{And02,Grig77}.
Proposition~\ref{prop-SCD} shows that the Delannoy lattice $\cD_n$ admits a symmetric chain decomposition and hence is rank-symmetric and rank-unimodal.
It is therefore natural to ask whether $\cD_n$ is also normal.

\begin{prob}
Is the Delannoy lattice $\cD_n$ normal for every $n\ge0$?
\end{prob}

The answer is affirmative for $n\le5$.

\begin{thm}\label{thm:Dn-normal-small}
The Delannoy lattice $\cD_n$ is normal for $0\le n\le5$.
\end{thm}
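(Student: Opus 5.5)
The plan is to reduce normality to a finite verification and organize it so that most of the work is done by structure rather than brute force. Normality (the normalized matching property) is equivalent, by the standard LYM/flow argument, to the existence of a \emph{regular} weighting of each pair of consecutive levels. Concretely, for each $k$ we need nonnegative weights $w(x,y)$ on the cover relations $x\cover y$ with $x\in(\cD_n)_k$ and $y\in(\cD_n)_{k+1}$. These weights should satisfy $\sum_y w(x,y)=|P_{k+1}|$ for every $x$ and $\sum_x w(x,y)=|P_k|$ for every $y$. Equivalently, the bipartite graph of covers between $P_k$ and $P_{k+1}$ must admit a fractional perfect ``normalized'' matching. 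By Hall-type max-flow duality, this is the same as \eqref{eq-normal} for every $X\subseteq P_k$. The point of this reformulation is that an explicit weighting certifies normality, so we never need to check all $2^{|P_k|}$ subsets.

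We reduce the number of level pairs using Proposition~\ref{prop:self-dual}. The involution $\sig$ maps $P_k$ onto $P_{n-k}$ and reverses covers. The normalized matching inequality between $P_k$ and $P_{k+1}$ is equivalent to the dual inequality from $P_{k+1}$ down to $P_k$, because the complement argument gives $|\Gamma^{-}(Y)|/|P_k|\ge |Y|/|P_{k+1}|$. Hence it suffices to treat $k<\floor{n/2}$ together with the middle pair when $n$ is odd. The rank sizes are $d(n,k)$, so for $n\le5$ the levels are small: $\cD_5$ has sizes $1,9,25,25,9,1$.

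For the bottom pair $P_0\to P_1$ and its dual, normality is trivial, since $\hat0$ is covered by every atom. This disposes of $n\le2$ entirely. For $n=3$ the only remaining pair is $P_1\to P_2$, with $5$ elements on each side. Figure~\ref{fig-Dn} shows that each element has degree $2$ or $3$, and a direct Hall check (or an explicit fractional matching) suffices.

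For $n=4,5$ I would look for regular weightings with a good deal of symmetry. One option is to exploit the reversal of words, $p\mapsto$ the reverse of $p$, which is clearly an automorphism of $\cD_n$. Another is to use the Boolean decomposition of Theorem~\ref{sbo}: inside each $B_M$ the Boolean levels are normal with uniform weights, and one adjusts the weights on the cross-component covers $HH\cover D$ and $D\cover VV$ to balance the level sizes $d(n,k)$.

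The main obstacle is exactly this balancing step. The component-wise weights are regular with respect to the sizes of the Boolean levels, not with respect to $d(n,k)$, so a hand-built global weighting is fiddly. My fallback is to treat the remaining cases ($n=4$: pair $P_1\to P_2$; $n=5$: pairs $P_1\to P_2$ and $P_2\to P_3$) as small linear feasibility problems, or equivalently as a max-flow computation. For each one I would either exhibit the resulting weight matrix or verify \eqref{eq-normal} directly by computer enumeration of subsets $X$. Only the $n=5$ middle pair, $25\times25$, is large, and there I would rely on the flow certificate.
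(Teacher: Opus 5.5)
Your reduction is sound and matches the paper's: the complement argument plus Proposition~\ref{prop:self-dual} leaves only $P_1\to P_2$ for $n=3,4,5$ and the middle pair $P_2\to P_3$ for $n=5$. The regular-weighting reformulation is also a correct equivalent of \eqref{eq-normal}. The gap is that for $n=4,5$ nothing is actually established. Both remaining pairs are deferred to a linear-feasibility or subset-enumeration computation whose certificate you never produce. For a finite statement such a computation would be a legitimate proof, and it would succeed. As written, though, the proposal is only a plan for a verification, and here the certificate is the entire content.

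You also misjudge where the difficulty lies. The $25\times 25$ middle pair for $n=5$, which you single out as the hardest case, is immediate. When $|P_k|=|P_{k+1}|$, \eqref{eq-normal} is just Hall's condition $|\Gamma(X)|\ge|X|$. The symmetric chain decomposition of Proposition~\ref{prop-SCD} supplies a perfect matching by covers between $(\cD_5)_2$ and $(\cD_5)_3$, since every symmetric chain meeting rank $2$ also meets rank $3$. The same remark settles $n=3$.

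For $P_1\to P_2$ the paper gives a hand proof valid for all $n\ge2$, using the lattice structure rather than weights. It rests on three facts:
\begin{enumerate}[(1)]
    \item every atom has at least $n-1$ upper covers in rank $2$;
    \item two distinct atoms have at most one common upper cover in rank $2$, because it must equal their join;
    \item every rank-$2$ element covers at least two atoms.
\end{enumerate}
Set $a=|P_1|=2n-1$ and $b=|P_2|=2n^2-6n+5$, so that $\frac{2n-5}{2}<\frac ba<\frac{2n-3}{2}$. Facts (1) and (2) handle $|X|\le 2$. For $|X|\ge3$, facts (2) and (3) give an injection from $P_2\setminus\Gamma(X)$ into the $2$-subsets of $T=P_1\setminus X$. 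Since $t=|T|\le 2n-4$, this yields $|P_2\setminus\Gamma(X)|\le\binom t2\le\frac{2n-5}{2}t\le\frac ba t$, which rearranges to $|\Gamma(X)|\ge\frac ba|X|$. These two observations replace your computation entirely.
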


The proof of Theorem \ref{thm:Dn-normal-small} proceeds by establishing the normalized matching inequality \eqref{eq-normal} between the rank 1 and rank 2 layers of $\cD_n$ for all $n$, and then applying the self-duality of $\cD_n$.
For the sake of brevity, the details of the proof are deferred to the Appendix.

\appendix
\section{Proof of Theorem \ref{thm:Dn-normal-small}}

\begin{lem}\label{lem:rank12-normal}
    For every $n\ge2$, the normalized matching inequality \eqref{eq-normal} holds between $(\cD_n)_1$ and $(\cD_n)_2$.
\end{lem}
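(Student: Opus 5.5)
The plan is to prove the normalized matching inequality in its equivalent flow form. By a standard max-flow/min-cut (or Hall-type) argument, \eqref{eq-normal} between $(\cD_n)_1$ and $(\cD_n)_2$ holds if and only if there are nonnegative weights $w(x,y)$ on the cover relations $x\cover y$ with $x\in(\cD_n)_1$, $y\in(\cD_n)_2$ satisfying two conditions. First, $\sum_y w(x,y)=|(\cD_n)_2|=d(n,2)$ for every $x$ of rank $1$. Second, $\sum_x w(x,y)\le|(\cD_n)_1|=d(n,1)$ for every $y$ of rank $2$. Summing the weights over the edges leaving $X$ then gives $|X|\,d(n,2)\le|\Gamma(X)|\,d(n,1)$, which is \eqref{eq-normal}. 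Here $d(n,1)=2n-1$ and
\[
d(n,2)=\binom n2+(n-1)(n-2)+\binom{n-2}2=2n^2-6n+5.
\]

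First, I will list the two layers explicitly by type. Rank $1$ consists of the $n$ paths $V_i$ (one $V$ at position $i$, all other steps $H$) and the $n-1$ paths $D_i$ (a $D$ occupying positions $i,i+1$, all other steps $H$). Rank $2$ consists of three types: $VV$-type (two $V$'s), $VD$-type (one $V$ and one $D$), and $DD$-type (two $D$'s). Using \eqref{eq-cover}, I will compute all covers between the layers.
\begin{enumerate}[(a)]
\item A $VV$-type element with $V$'s at positions $i<j$ is covered from $V_i$ and $V_j$, and also from $D_i$ exactly when $j=i+1$. So its down-degree is $2$ or $3$.
\item A $VD$-type element is covered from exactly one $V$-type element (turn its $V$ into $H$) and exactly one $D$-type element (turn its $D$ into $HH$).
\item A $DD$-type element is covered from exactly two $D$-type elements.
\end{enumerate}
Dually, the up-degree of $V_i$ is $2n-3$ or $2n-4$, depending on whether $i$ is an endpoint. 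The up-degree of $D_i$ is $1+(n-2)+\#\{\text{edges of } G_n \text{ disjoint from } \{i-1,i,i+1,i+2\}\}$, which equals $2n-5$ in the interior and is larger near the ends.

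Next, I will look for weights that are constant on natural classes of edges. These are: $V\to VV$, $V\to VD$, $D\to VV$ (only the adjacent case), $D\to VD$, and $D\to DD$, with a correction near the two ends of the path graph. The heuristic is that $d(n,2)/d(n,1)\approx n-2$ while up-degrees are about $2n$, so an average edge weight of about $1/2$ of the target suffices. The only dangerous vertices are the adjacent-$VV$ elements, which have down-degree $3$. To handle them, I will give the edges $D_i\to V^{(i)}V^{(i+1)}$ a small weight, and compensate by putting extra weight on $D_i\to DD$ and $D_i\to VD$ edges. The $DD$- and $VD$-type elements have down-degree only $2$, so they have slack. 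Concretely, I will try $w(V_i,\cdot)$ uniform over its $2n-3$ or $2n-4$ up-neighbours. For $D_i$, I will choose the weights so that each $VD$- and $DD$-type vertex receives total at most $2n-1$, and then solve the resulting small linear system in the per-class weights as functions of $n$.

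The main obstacle is the boundary effect. The elements $V_1$, $V_n$, $D_1$, $D_{n-1}$ and their neighbours have degrees different from the interior ones, so a uniform class weighting may violate the capacity $2n-1$ at a few rank-$2$ vertices near the ends. If the uniform choice fails there, I would fall back on proving Hall's condition directly. I would split $X=X_V\cup X_D$ and bound $|\Gamma(X)|$ from below by counting $VV$-, $VD$- and $DD$-type neighbours separately. Each such count is a simple function of $|X_V|$, $|X_D|$ and the positions involved, since for instance the $VV$-neighbours of $X_V$ number at least $\binom{|X_V|}2+|X_V|(n-|X_V|)$. I would then check the resulting inequality for all $|X_V|\le n$ and $|X_D|\le n-1$, treating the finitely many small $n$ by hand.
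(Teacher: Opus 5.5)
Your reduction to edge weights is valid in the direction you need. But you never produce the weights, and the weighting you sketch cannot exist.

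\textbf{There is no slack.} The prescribed out-weights total $d(n,1)d(n,2)$, which is exactly the total capacity $d(n,2)d(n,1)$. So every rank-$2$ element, $VD$- and $DD$-types included, must receive exactly $a=2n-1$.

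\textbf{Class-constant weights fail in the interior.} Even if you split $V\to VV$ into adjacent and non-adjacent, let $\alpha,\alpha',\beta$ be the weights on $V\to$ non-adjacent $VV$, $V\to$ adjacent $VV$, $V\to VD$, and let $\gamma,\delta,\epsilon$ be those on $D\to VV$, $D\to VD$, $D\to DD$. Saturation forces $2\alpha=2\epsilon=a$, $2\alpha'+\gamma=a$ and $\beta+\delta=a$. An interior $V_i$ has $n-3$, $2$, $n-3$ up-covers of its three kinds. An interior $D_i$ ($2\le i\le n-2$) has $1$, $n-2$, $n-4$. Solving the two out-sum equations (each equal to $b$) gives $\delta=n+\tfrac92$ and $\gamma=12-4n<0$ for $n\ge4$.

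So the failure is not only at the ends. If you satisfy the $V_i$ equations with $\gamma=0$, as your five-class scheme does, every interior $D_i$ falls short by exactly $4$. The matching surplus sits only at $V_1,V_n,D_1,D_{n-1}$. Any valid weighting must therefore transport flow across the whole poset along alternating paths, which your plan does not do.

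Your fallback Hall check is only announced. It is the entire content of the lemma, and the positional minimisation it requires is not carried out.

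\textbf{The missing idea} is to count non-neighbours using the lattice structure. Put $T=(\cD_n)_1\setminus X$ and $t=|T|$.
\begin{enumerate}[(1)]
\item Each $y\in(\cD_n)_2\setminus\Gamma(X)$ covers at least two rank-$1$ elements, and all of them lie in $T$.
\item Distinct rank-$1$ elements $x,x'$ have at most one common upper cover of rank $2$. Any such cover lies above $x\join x'$, which already has rank at least $2$, so it equals $x\join x'$.
\end{enumerate}
Hence choosing two lower covers for each such $y$ is an injection into pairs from $T$, so $|(\cD_n)_2\setminus\Gamma(X)|\le\binom t2$.

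When $t\le 2n-4$, i.e.\ $|X|\ge3$, we have $\binom t2\le\frac{2n-5}{2}t\le\frac ba t$. This gives $|\Gamma(X)|\ge b-\frac ba t=\frac ba|X|$. For $|X|=1,2$, use that every rank-$1$ element has at least $n-1$ upper covers, together with fact (2) and $\frac ba<\frac{2n-3}{2}$.

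Your type-by-type split does work if you apply it to non-neighbours. With $k'=n-|X_V|$ and $m'=n-1-|X_D|$, at most $\binom{k'}2$ $VV$-type, $k'm'$ $VD$-type and $\binom{m'}2$ $DD$-type elements avoid $\Gamma(X)$. These total $\binom{k'+m'}{2}=\binom t2$, the same bound.

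Two small slips:
\begin{itemize}
\item In (b) the two parenthetical operations are swapped.
\item The up-degree of $D_i$ counts edges disjoint from $\{i,i+1\}$, not from $\{i-1,i,i+1,i+2\}$. The latter would give $2n-7$ in the interior, not $2n-5$.
\end{itemize}
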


\begin{proof}
Fix $n$, denote by $P_1$ and $P_2$ the rank $1$ and rank $2$ layers of $\cD_n$.
Elements of $P_1$ consists of Delannoy paths obtained from $H^n$ by either replacing one $H$ by one $V$
(with $n$ choices),
or replacing two consecutive $H$'s by one $D$
(with $n-1$ choices).
So
\[
a:=|P_1|=2n-1.
\]
Similar arguments gives
\[
b:=|P_2|=\binom n2+(n-1)(n-2)+\binom{n-2}{2}=2n^2-6n+5,
\]
where the three terms count Delannoy paths with exactly two $V$'s, one $V$ and one $D$, and two $D$'s, respectively.
It suffices to show that
\begin{equation}\label{eq-ab}
    |\Ga(X)|\ge\frac{b}{a} |X|.
\end{equation}
Note that
\begin{equation}\label{eq-ineq}
    \frac{2n-5}{2}<\frac{b}{a}=\frac{2n-5}{2}+\frac{5}{2(2n-1)}<\frac{2n-3}{2}.
\end{equation}

\textbf{Claim 1.}
For each $x\in P_1$, there are at least $n-1$ elements in $P_2$ that cover $x$.

\textit{Proof.}
Every $x\in P_1$ consists of
either one $V$ and $n-1$ occurrences of $H$,
or one $D$ and $n-2$ occurrences of $H$.
Each $H$ can be changed to $V$, and each $D$ can be changed to $VV$.
So there are at least $n-1$ choices to obtain an element that cover $x$.
\qed

\textbf{Claim 2.}
For distinct $x,x'\in P_1$, there are at most one element in $P_2$ that cover both $x$ and $x'$.

\textit{Proof.}
If $y\in P_2$ covers distinct $x,x'\in P_1$, then $x\join x'\le y$.
Since $x\ne x'$, we have $r(x\join x')\ge2$. 
As $r(y)=2$, it follows that $x\join x'=y$.
An element covering both $x$ and $x'$ may appear in any layer of rank $\ge2$,
but such an element that appears in $P_2$ is uniquely determined by $x$ and $x'$.
\qed

\textbf{Claim 3.}
Every element in $P_2$ covers at least two distinct elements in $P_1$.

\textit{Proof.}
The non-$H$ steps of $y\in P_2$ have exactly three possibilities:
two independent $V$ steps,
two independent $D$ steps,
and one $V$ and one $D$.
Then an $x\in P_1$ covered by $y$ is obtained from $y$ by
either replacing one $V$ by $H$
or replacing one $D$ by $HH$.
So in each of the possibilities, $x$ can be obtained in two different ways.
\qed

Let $X\subseteq P_1$.
There is nothing to prove if $X=\emptyset$ or $X=P_1$.
Now suppose that $0<|X|<a$.
If $|X|=1$, then Claim 1 implies
\[
|\Gamma(X)|\ge n-1>\frac{a}{b}.
\]
Thus \eqref{eq-ab} is true whenever $n\ge2$.
If $|X|=2$, then combining Claim 1 and Claim 2 yields
\[
|\Gamma(X)|\ge2(n-1)-1=2n-3>\frac{2b}{a}=\frac ba |X|.
\]
Suppose $|X|\ge3$.
Set $T=P_1\setminus X$, and $|T|=t$.
Let $y\in P_2\setminus\Gamma(X)$.
Then the elements covered by $y$ all belong to $T$.
By Claim 3 we can choose two distinct such elements $u_y$ and $v_y$.
Then the map $y\longmapsto\{u_y,v_y\}$ is injective by Claim 2. 
Therefore,
\[
|P_2\setminus\Gamma(X)|\le\binom{t}{2}.
\]
Note that
\[
t=a-|X|\le a-3=2n-4.
\]
Combining this with \eqref{eq-ineq} implies
\[
\binom{t}{2}
=\frac{t(t-1)}{2}
\le\frac{2n-5}{2}\,t
\le\frac{b}{a}\,t.
\]
Hence,
\[
|\Gamma(X)|=b-|P_2\setminus\Gamma(X)|\ge b-\frac{b}{a}t=\frac{b}{a}(a-t)=\frac{b}{a}|X|.
\]
This proves the desired inequality \eqref{eq-ab}.
\end{proof}

\begin{proof}[Proof of Theorem \ref{thm:Dn-normal-small}]
We first note that normalized matching inequalities are preserved
under duality.
Suppose that the inequality holds between $P_k$ and $P_{k+1}$,
and let $Y\subseteq P_{k+1}$.
Denote
\[
\Gamma^-(Y)=\{x\in P:x\cover y\text{ for some }y\in Y\}.
\]
Taking $X=P_k\setminus\Gamma^-(Y)$ gives
$\Gamma(X)\subseteq P_{k+1}\setminus Y$, and hence
\[
1-\frac{|\Gamma^-(Y)|}{|P_k|}
=\frac{|X|}{|P_k|}
\le\frac{|\Gamma(X)|}{|P_{k+1}|}
\le1-\frac{|Y|}{|P_{k+1}|}.
\]
This is the normalized matching inequality in the reversed direction.

The inequalities involving rank $0$ or rank $n$ are trivial.
Lemma~\ref{lem:rank12-normal} gives the inequality between
ranks $1$ and $2$ for $n\ge2$.
By the self-duality in Proposition~\ref{prop:self-dual},
it also gives the inequality between ranks $n-2$ and $n-1$. This proves the result for $n\le4$.

For $n=5$, it remains only to consider ranks $2$ and $3$.
The symmetric chain decomposition in Proposition~\ref{prop-SCD} induces a perfect matching from $(\cD_5)_2$ to $(\cD_5)_3$.
Thus $|\Gamma(X)|\ge|X|$ for every $X\subseteq(\cD_5)_2$.
Since the two ranks have the same size, this proves the normalized matching inequality.
\end{proof}

\section*{Acknowledgements}
We thank Yi Wang for helpful discussions and suggestions.
This work was supported in part by the National Natural Science Foundation of China (Nos. 12671384, 12571348).

\end{document}